\newtheorem{theorem}{Theorem}
\newtheorem{lemma}{Lemma}
\theoremstyle{definition}
\theoremstyle{remark}
\newtheorem{remark}{Remark}
\numberwithin{equation}{section}
\begin{document}
\nocite*
% \title[short text for running head]{full title}
\title{Variance of Lattice Point Counting in Thin Annuli}

%    Only \author and \address are required; other information is
%    optional.  Remove any unused author tags.

%    author one information
 \author[L. Colzani]{Leonardo Colzani}
%\author{}
\address{Dipartimento di Matematica e Applicazioni,
   Universit\`a degli Studi di Milano-Bicocca,
   Via R. Cozzi 55, 20125 Milano,
   Italy}
\curraddr{}
\email{leonardo.colzani@unimib.it}
\thanks{}

%    author two information
\author[B. Gariboldi]{Bianca Gariboldi}
\address{Dipartimento di Ingegneria Gestionale, dell'Informazione e della Produzione,
  Universit\`a degli Studi di Bergamo,
  Viale Marconi 5, 24044 Dalmine (BG),
  Italy}
\curraddr{}
\email{biancamaria.gariboldi@unibg.it}
\thanks{}

%    author three information
\author[G. Gigante]{Giacomo Gigante}
\address{Dipartimento di Ingegneria Gestionale, dell'Informazione e della Produzione,
  Universit\`a degli Studi di Bergamo,
  Viale Marconi 5, 24044 Dalmine (BG),
  Italy}
\curraddr{}
\email{giacomo.gigante@unibg.it}
\thanks{}
%    \subjclass is required.
\subjclass[2010]{60D05, 42B05, 11P21 (primary)}

\date{}

\dedicatory{}

%    Abstract is required.
\begin{abstract}
We give asymptotic estimates of the variance of the
number of integer points in translated thin annuli in any dimension.
\end{abstract}

\maketitle

Y. G. Sinai proved in \cite{Si} that the number of integer points in the plane
inside a thin annulus of fixed area $\lambda$, of random shape and large random radius, with a
suitable definition of randomness, converges in distribution to a Poisson
random variable with parameter $\lambda $. The probabilistic proof does not
exhibit a specific annulus.
See also \cite{M, Mi}.
 Indeed in \cite{M} it is shown that the number of integer points in the circular annulus $\{r-1/4r < |x| \leq r+1/4r \}$ in the plane does not converge to a Poisson distribution when $ r  $ varies randomly and uniformly in $\left[a_1 L,a_2 L\right]$ and $L$ goes to $+\infty$. 
The reason is that, under the condition that the annulus contains some integer points, then with probability almost one the number of integer points in the annulus tends to infinity.
%If a circular annulus centered at the origin is not random enough, 
On the other hand, a translation of the annulus breaks the symmetry, and the situation changes.
Indeed Z. Cheng, J. L. Lebowitz, P. Major proved in
\cite{CLM} that if $\Omega $ is a convex set in the plane with a
smooth boundary with positive curvature, then the expectation and variance
for the number of integer points in a shifted annular region of radius $r$ and thickness $c/r$ 
\[
\left[\left(
r+c/\left( 2r\right) \right) \Omega-x\right] \setminus\left[\left( r-c/\left( 2r\right) \right)
\Omega-x\right],
\] 
where $x$ is uniformly distributed in the unit square,
 are both asymptotic to the area of the annulus $2c\left\vert 
\Omega \right\vert $ as $c$ is fixed and $r\rightarrow +\infty $. Since the mean and the
variance of a Poisson distribution coincide, this is
consistent with the conjecture that this random variable converges in
distribution to a Poisson random variable. Indeed these authors briefly
mention higher dimensional analogues. The following is a proof of these
higher dimensional analogues via Fourier analysis.

\begin{theorem} \label{T1}
Assume that $\Omega $ is a convex body
in $\mathbb{R}^{d}$ with smooth boundary with everywhere positive
Gaussian curvature, which contains in its interior the origin. Denote by 
$\Omega \left( r,t\right) $ the annulus $\left( r+\left( t/2\right)
\right) \Omega \setminus \left( r-\left( t/2\right) \right) \Omega $, and by 
$\left\vert \Omega \left( r,t\right) \right\vert $ its volume. Then
for every $\alpha >\left( d-1\right) /\left( d+1\right) $ there
exists $0<\beta <1$ and a positive constant $C$ such
that for every $1\leq r<+\infty $ and every $0<t\leq r^{-\alpha }$one has 
\[
\left\vert \int_{\mathbb{T}^{d}}\left\vert \sum_{k\in \mathbb{Z}^{d}}\chi
_{\Omega \left( r,t\right) - x}\left( k\right) -\left\vert \Omega \left(
r,t\right) \right\vert \right\vert ^{2}dx-\left\vert \Omega \left(
r,t\right) \right\vert \right\vert \leq C\left\vert \Omega \left( r,t\right)
\right\vert t^{\beta }.
\]
\end{theorem}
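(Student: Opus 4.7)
The plan is to apply Parseval on the torus to reduce the variance to a sum of squared Fourier coefficients, and then to evaluate that sum via the classical stationary-phase expansion for $\widehat{\chi_\Omega}$. Set $f=\chi_{\Omega(r,t)}$. Poisson summation gives
\[
\sum_{k\in\mathbb{Z}^d}\chi_{\Omega(r,t)-x}(k)-|\Omega(r,t)|=\sum_{n\neq 0}\hat f(n)\,e^{2\pi i n\cdot x},
\]
so by Parseval the integral in the theorem equals $\sum_{n\neq 0}|\hat f(n)|^2$. It therefore suffices to prove
\[
\Bigl|\sum_{n\neq 0}|\hat f(n)|^2-|\Omega(r,t)|\Bigr|\le C\,|\Omega(r,t)|\,t^\beta.
\]

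Since $f=\chi_{(r+t/2)\Omega}-\chi_{(r-t/2)\Omega}$, scaling yields
$\hat f(n)=(r+t/2)^d\widehat{\chi_\Omega}\bigl((r+t/2)n\bigr)-(r-t/2)^d\widehat{\chi_\Omega}\bigl((r-t/2)n\bigr)$.
Inserting the standard asymptotic
\[
\widehat{\chi_\Omega}(\xi)=|\xi|^{-(d+1)/2}\sum_{\pm}A_\pm(a)\,e^{\pm 2\pi i|\xi|\rho(\pm a)}+O(|\xi|^{-(d+3)/2}),
\]
valid for a smooth convex body with positive Gaussian curvature (with $a=\xi/|\xi|$ and $\rho$ the support function), and collapsing the pair of exponentials $e^{2\pi i(r\pm t/2)|n|\rho}$ into a sine, the leading form is
\[
\hat f(n)\approx r^{(d-1)/2}|n|^{-(d+1)/2}\sum_{\pm}C_\pm(a)\,\sin\bigl(\pi t|n|\rho(\pm a)\bigr)\,e^{\pm 2\pi i r|n|\rho(\pm a)},
\]
with relative errors of order $t/r$ and $(r|n|)^{-1}$ that must be tracked and shown to contribute only at lower order.

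Squaring this expression produces diagonal terms $r^{d-1}|n|^{-(d+1)}|C_\pm(a)|^2\sin^2(\pi t|n|\rho(\pm a))$ (non-oscillating in $r$) and off-diagonal cross terms carrying the factor $e^{\pm 2\pi i r|n|(\rho(a)+\rho(-a))}$. The diagonal part, summed over $n\neq 0$, is compared with its continuous analogue, whose integral equals $|\Omega(r,t)|$ by Parseval on $\mathbb{R}^d$; the sum--integral discrepancy is controlled by a dyadic decomposition in $|n|$, using that the integrand behaves like $t^2|n|^{1-d}$ for $|n|\lesssim t^{-1}$ and like $|n|^{-(d+1)}$ for $|n|\gtrsim t^{-1}$. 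This identifies the main term $|\Omega(r,t)|$ modulo an acceptable error.

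The main obstacle is the off-diagonal estimate, namely bounding
\[
r^{d-1}\!\sum_{n\neq 0}|n|^{-(d+1)}\sin(\pi t|n|\rho(a))\sin(\pi t|n|\rho(-a))\,e^{\pm 2\pi i r|n|(\rho(a)+\rho(-a))}
\]
by $|\Omega(r,t)|\,t^\beta = O(r^{d-1}t^{1+\beta})$. This calls for a careful dyadic decomposition in $|n|$, together with a sharp oscillatory-integral bound on $S^{d-1}$ yielding a gain of $R^{-(d-1)/2}$ on each shell $|n|\sim R$, the nondegeneracy coming from the positive curvature of $\partial\Omega$ (equivalently, strict convexity of $\rho(a)+\rho(-a)$). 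The threshold $\alpha>(d-1)/(d+1)$ is forced by the balance between this gain and the amplitude of the $\sin\cdot\sin$ factor: on the critical shell $R\sim t^{-1}$ the phase is of size $rR\sim rt^{-1}\ge r^{1+\alpha}$, and only then does the geometric sum across shells produce a power saving $t^\beta$ with some explicit $0<\beta<1$.
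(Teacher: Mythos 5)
Your overall architecture matches the paper's: Parseval on the torus, the stationary-phase expansion of $\widehat{\chi}_\Omega$, decomposition of $|\widehat{\chi}_{\Omega(r,t)}(n)|^2$ into a non-oscillatory ``diagonal'' part and an oscillatory ``off-diagonal'' part, and comparison of the diagonal sum with its continuous integral to produce the main term $|\Omega(r,t)|$. Up to this point your argument is essentially the paper's Lemmas 1--5.

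The gap is in the off-diagonal estimate, which is the crux. You propose to treat $\sum_{n\ne 0}$ by dyadic shells $|n|\sim R$ and to gain $R^{-(d-1)/2}$ per shell from ``a sharp oscillatory-integral bound on $S^{d-1}$'' using the strict convexity of $\rho(a)+\rho(-a)$. That mechanism does not work. First, the phase you are oscillating against is $r|n|\bigl(\rho(n/|n|)+\rho(-n/|n|)\bigr)$, which is (up to scale) the support function of the Minkowski difference body $A=\Omega+(-\Omega)$. For the ball this angular function is \emph{constant}, so there is no stationary phase on $S^{d-1}$ at all and the proposed per-shell gain simply does not exist; the oscillation is purely radial, and its effectiveness depends on the arithmetic distribution of $|n|$ (the counts $r(k)=\#\{|n|^2=k\}$), not on any angular integral. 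Second, and more fundamentally, you are tacitly replacing a lattice sum over a shell by an integral; the paper explicitly warns that one cannot assume the off-diagonal series is asymptotic to its integral (it is even false without a speed assumption on $t$, cf.\ Remark~\ref{R1}), so this replacement is exactly the step that requires proof, not a reduction to one.

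The paper's resolution is different in kind: after a smooth radial cutoff, it applies Poisson summation \emph{again} to the off-diagonal sum, turning $\sum_n t^d f(tn)$ into $\sum_n \widehat{f}(t^{-1}n)$. The key oscillatory-integral bound (Lemma~\ref{L7}) then shows $\widehat{f}(t^{-1}n)$ is $O\bigl((rt^{-1})^{-(d-1)/2}\bigr)$ in size and rapidly decaying once $n$ leaves a thin neighbourhood of $\partial(rA)$; summing $|\widehat{f}(t^{-1}n)|$ therefore reduces to counting lattice points near $\partial(rA)$. To control that count one needs $A$ itself to be a smooth body with positive Gaussian curvature (Lemma~\ref{L6}), so that the Herz--Hlawka bound $\bigl|\#\{n\in rA\}-r^d|A|\bigr|\le Cr^{d(d-1)/(d+1)}$ applies. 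The exponent $(d-1)/(d+1)$ in the hypothesis $t\le r^{-\alpha}$ is forced precisely by this lattice-point exponent, not by the amplitude/phase balance you describe. Without the Poisson dualization, the reduction to a lattice count in $rA$, and the curvature of the Minkowski sum, your off-diagonal bound is unsupported.
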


The mean of the random variable that counts the number of integer points in
the annulus is the volume of the annulus, hence the above is an estimate of
the variance of this random variable. In particular, the theorem can be
rephrased by saying that the expectation and variance of the number of
integer points in translated annuli are asymptotic when  $ r \rightarrow + \infty $ and $t\rightarrow 0+$,
with $t\leq r^{-\alpha }$ for some $\alpha >\left( d-1\right) /\left(
d+1\right) $. Observe that when $t = o(r)$, then 
\begin{equation*}
\left\vert \Omega \left( r,t\right) \right\vert =\left\vert \left(
r+t/2\right) \Omega \right\vert -\left\vert \left( r-t/2\right) \Omega
\right\vert =\left( \left( r+t/2\right) ^{d}-\left( r-t/2\right) ^{d}\right)
\left\vert \Omega \right\vert \sim dr^{d-1}t\left\vert \Omega \right\vert .
\end{equation*}

In particular, under the assumption that $0<t\leq r^{-\alpha }$ with $\alpha
>\left( d-1\right) /\left( d+1\right) $, the measure of the annulus $%
\left\vert \Omega \left( r,t\right) \right\vert \sim dr^{d-1}t\left\vert
\Omega \right\vert $ may diverge. Also observe that with the above theorem
in dimension $d=2$ and with $r=c/t$ one recovers the results in
\cite{CLM}, and indeed the assumption $t=c/r$ can be replaced by
the weaker assumption $t\leq r^{-\alpha }$ for some $\alpha >1/3$. We do not
know if this assumption $0<t\leq r^{-\alpha }$ with $\alpha >\left(
d-1\right) /\left( d+1\right) $ can be weakened, but it follows from some
results in \cite{PS} that the only assumption that the widths of
the annuli converge to zero does not suffice, and one has to require a
suitable speed. Finally, also the curvature assumption is necessary. The
variance of annuli with boundary points of zero curvature may be much larger
than the mean, and an asymptotic estimate of the variance may fail. An
example are the annuli generated by polyhedra with faces with rational
orientation. See Remark \ref{R2} below.

The main tool in our proof is the Fourier expansion of the random variable that counts the integer points. As shown by D. Kendall in \cite{K}, an estimate from above of the variance
of the number of integer points in shifted ovals follows easily from
estimates of the order of decay of the Fourier transform of an oval. Here,
in order to obtain an asymptotic for the variance, we shall need to extract
from the Fourier transform more precise geometric informations. We split
this proof in a number of lemmas.

\begin{lemma}\label{L1}
If $\Omega $ is a domain in $\mathbb{R}^{d}$ with finite measure, then the number of integer points in 
$\Omega -x$ is a periodic function of the translation $x$,
and it has the Fourier expansion 
\[
\sum_{k\in \mathbb{Z}^{d}}\chi _{\Omega -x}(k)=\sum_{n\in \mathbb{Z}^{d}}%
\widehat{\chi }_{\Omega }\left( n\right) \exp \left( 2\pi inx\right) .
\]
In particular, this Fourier expansion converges in the square
metric, and 
\[
\int_{\mathbb{T}^{d}}\left\vert \sum_{k\in \mathbb{Z}^{d}}\chi _{\Omega
-x}(k)-\left\vert \Omega \right\vert \right\vert ^{2}dx=\sum_{n\in \mathbb{Z%
}^{d} \setminus \left\{ 0\right\} }\left\vert \widehat{\chi }_{\Omega }\left( n\right)
\right\vert ^{2}.
\]
\end{lemma}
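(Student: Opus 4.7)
The strategy is to identify the lattice-point counting function as the $\mathbb{Z}^{d}$-periodization of $\chi_{\Omega}$ and then apply standard Parseval-type arguments on the torus.

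First I would rewrite
\[
\sum_{k\in\mathbb{Z}^{d}}\chi_{\Omega-x}(k)=\sum_{k\in\mathbb{Z}^{d}}\chi_{\Omega}(k+x)=:F(x),
\]
which is manifestly $\mathbb{Z}^{d}$-periodic in $x$. Since $\chi_{\Omega}\in L^{1}(\mathbb{R}^{d})$, Tonelli gives $\int_{\mathbb{T}^{d}}F(x)\,dx=|\Omega|<+\infty$, so $F\in L^{1}(\mathbb{T}^{d})$ and the defining sum converges absolutely for almost every $x$. Next I would compute the Fourier coefficients of $F$ by unfolding: interchanging the sum and the integral (justified by absolute convergence), substituting $y=k+x$, and using that $e^{-2\pi in\cdot k}=1$ for $n,k\in\mathbb{Z}^{d}$, the integrals over the shifted cubes $k+\mathbb{T}^{d}$ reassemble into a single integral over $\mathbb{R}^{d}$, yielding $\widehat{F}(n)=\widehat{\chi}_{\Omega}(n)$. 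This establishes the claimed Fourier expansion at the level of Fourier coefficients.

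To upgrade to $L^{2}$ convergence and the Parseval identity, I would note that in the applications $\Omega$ is bounded, so $F(x)$ is uniformly bounded by the number of lattice points in any cube containing $\Omega$; in particular $F\in L^{2}(\mathbb{T}^{d})$. Hence by Riesz--Fischer the Fourier series of $F$ converges to $F$ in the square metric and Parseval gives $\|F\|_{L^{2}(\mathbb{T}^{d})}^{2}=\sum_{n}|\widehat{\chi}_{\Omega}(n)|^{2}$. Subtracting the $n=0$ term $|\widehat{\chi}_{\Omega}(0)|^{2}=|\Omega|^{2}$ from both sides, together with expanding $\|F-|\Omega|\,\|_{L^{2}}^{2}$ by orthogonality of the trivial character, produces the stated variance formula.

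The only genuinely delicate point is the transition to $L^{2}$ convergence. For a bounded convex body this is essentially free from the pointwise boundedness of $F$; for a general domain of finite measure one could instead compute $\int_{\mathbb{T}^{d}}|F|^{2}\,dx$ directly as $\sum_{m\in\mathbb{Z}^{d}}|\Omega\cap(\Omega-m)|$ and apply Poisson summation to the autocorrelation $\chi_{\Omega}*\chi_{\Omega}^{\vee}$ (continuous, and compactly supported when $\Omega$ is bounded) to identify this sum with $\sum_{n}|\widehat{\chi}_{\Omega}(n)|^{2}$, recovering Parseval by the same route.
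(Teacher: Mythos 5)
Your proposal is correct and follows essentially the same route as the paper: recognize the counting function as the $\mathbb{Z}^{d}$-periodization of $\chi_{\Omega}$, compute its Fourier coefficients by unfolding the integral over shifted cubes, and then invoke Parseval. The paper states these steps more tersely (calling the first equality ``the Poisson summation formula'' and the second ``Parseval's identity''), while you add an explicit justification for $L^{2}$ membership via boundedness of $F$ when $\Omega$ is bounded — a point the paper leaves implicit, and which is the only place where the hypothesis ``finite measure'' is slightly weaker than what is actually needed (for a general finite-measure domain both sides of the variance identity can be $+\infty$, consistently, since $F\in L^{1}$ with $\widehat{F}\in\ell^{2}$ already forces $F\in L^{2}$).
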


\begin{proof} The first equality is the Poisson summation formula. If one
identifies the torus $\mathbb{T}^{d}=\mathbb{R}^{d}/\mathbb{Z}^{d}$ with the
unit cube $\left\{ -1/2\leq x_{j}<1/2\right\} $, then 
\begin{align*}
\sum_{k\in \mathbb{Z}^{d}}\chi _{\Omega -x}(k) &=\sum_{n\in \mathbb{Z}%
^{d}}\left( \int_{\mathbb{T}^{d}}\sum_{k\in \mathbb{Z}^{d}}\chi _{\Omega
-y}(k)\exp \left( -2\pi iny\right) dy\right) \exp \left( 2\pi inx\right) \\
&=\sum_{n\in \mathbb{Z}^{d}}\left( \int_{\mathbb{R}^{d}}\chi _{\Omega
}(y)\exp \left( -2\pi iny\right) dy\right) \exp \left( 2\pi inx\right)\\
&=\sum_{n\in \mathbb{Z}^{d}}\widehat{\chi }_{\Omega }\left( n\right) \exp
\left( 2\pi inx\right) .
\end{align*}

The second equality is Parseval's identity, just observe that $\widehat{%
\chi }_{\Omega }\left( 0\right) =\left\vert \Omega \right\vert $.
\end{proof}

We emphasize that the above lemma does not claim that the Fourier expansions
of the random variables converge pointwise. Indeed it can be shown that in
dimensions $d=1$ and $d=2$ and for domains with smooth boundaries the above
Fourier expansions are pointwise spherically convergent, but this is not the
case if $d\geq 3$. Anyhow, the series are summable pointwise with suitably
strong summability methods at every point $x$ with $\mathbb{Z}^{d}\cap
\partial \left\{ \Omega -x\right\} =\varnothing $. 

The above lemma suggests
the search of precise estimates of the Fourier transform of an annulus. In
order to guess the correct result, it may be helpful to have an explicit
example. The Fourier transform of the sphere $\left\{ \left\vert
x\right\vert \leq r\right\} $ is a Bessel function, 
\[
\widehat{\chi }_{\left\{ \left\vert x\right\vert \leq r\right\} }\left( \xi
\right) =r^{d}\widehat{\chi }_{\left\{ \left\vert x\right\vert \leq
1\right\} }\left( r\xi \right) =r^{d}\left\vert r\xi \right\vert
^{-d/2}J_{d/2}\left( 2\pi r\left\vert \xi \right\vert \right) .
\]

See \cite[Theorem 4.15, Chapter IV]{SW}. Hence, the Fourier
transform of the annulus $\left\{ r-t/2 < \left\vert x\right\vert \leq
r+t/2\right\} $ is 
\begin{align*}
\widehat{\chi }_{\left\{ r-t/2 < \left\vert x\right\vert \leq
r+t/2\right\} }\left( \xi \right) &=\widehat{\chi }_{\left\{ \left\vert
x\right\vert \leq r+t/2\right\} }\left( \xi \right) -\widehat{\chi }%
_{\left\{ \left\vert x\right\vert \leq r-t/2\right\} }\left( \xi \right) \\
&=r^{d/2}\left\vert \xi \right\vert ^{-d/2}\left( J_{d/2}\left( 2\pi \left(
r+t/2\right) \left\vert \xi \right\vert \right) -J_{d/2}\left( 2\pi \left(
r-t/2\right) \left\vert \xi \right\vert \right) \right) \\
& \ +\left( \left( r+t/2\right) ^{d/2}-r^{d/2}\right) \left\vert \xi \right\vert
^{-d/2}J_{d/2}\left( 2\pi \left( r+t/2\right) \left\vert \xi \right\vert
\right) \\
& \ -\left( \left( r-t/2\right) ^{d/2}-r^{d/2}\right) \left\vert \xi \right\vert
^{-d/2}J_{d/2}\left( 2\pi \left( r-t/2\right) \left\vert \xi \right\vert
\right) .
\end{align*}

Recall the asymptotic expansions of Bessel functions, 
\begin{align*}
J_{v}\left( z\right) &=2^{1/2}\pi ^{-1/2}z^{-1/2}\cos \left( z-\pi \left(
2\nu +1\right) /4\right) +\mathcal{O}\left( z^{-3/2}\right) ,\\
\dfrac{d}{dz}J_{\nu }\left( z\right) &=2^{-1}\left( J_{\nu -1}\left( z\right)
-J_{\nu +1}\left( z\right) \right) \\
&=-2^{1/2}\pi ^{-1/2}z^{-1/2}\sin \left( z-\pi \left( 2\nu +1\right)
/4\right) +\mathcal{O}\left( z^{-3/2}\right) .
\end{align*}

Then, from these formulas and with some trigonometry, one obtains the
asymptotic expansion of the Fourier transform of a spherical shell, 
\begin{align*}
&\widehat{\chi }_{\left\{ r-t/2 < \left\vert x\right\vert \leq
r+t/2\right\} }\left( \xi \right)\\
&=2\pi ^{-1}r^{\left( d-1\right) /2}\left\vert \xi \right\vert ^{-\left(
d+1\right) /2}\cos \left( 2\pi r\left\vert \xi \right\vert -\pi \left(
d-1\right) /4\right) \sin \left( \pi t\left\vert \xi \right\vert \right) \\
& \ +\mathcal{O}\left( r^{\left( d-3\right) /2}t\left\vert \xi \right\vert
^{-\left( d+1\right) /2}\right) .
\end{align*}

When the dimension of the space is odd, the Bessel functions can be written
explicitly in terms of trigonometric functions, and one can also obtain an
exact formula for this Fourier transform in terms of elementary functions.
The behavior of the Fourier transforms of convex bodies and annuli is
similar, although a bit more complicated.

\begin{lemma}\label{L2} 
The Fourier transform of a characteristic
function of a convex body $\Omega $ in $\mathbb{R}^{d}$
with smooth boundary with everywhere positive Gaussian curvature for 
$\left\vert \xi \right\vert \rightarrow +\infty $ has the asymptotic
expansion 
\[
\widehat{\chi }_{\Omega }\left( \xi \right) =a\left( \xi \right) \left\vert
\xi \right\vert ^{-\left( d+1\right) /2}+E\left( \xi \right) .
\]

If $\sigma \left( \pm \xi \right) $ are the points of the
boundary of $\Omega $ with outward unit normals $\pm \xi
/\left\vert \xi \right\vert $, and if $K\left( \sigma \left( \pm
\xi \right) \right) $ are the Gaussian curvatures at the points $%
\sigma \left( \pm \xi \right) $, then %
\begin{align*}
a\left( \xi \right) &=\left( 2\pi i\right) ^{-1}\exp \left( -2\pi i\sigma
\left( -\xi \right) \cdot \xi -\pi i\left( d-1\right) /4\right) K\left(
\sigma \left( -\xi \right) \right) ^{-1/2} \\
& \ -\left( 2\pi i\right) ^{-1}\exp \left( -2\pi i\sigma \left( \xi \right)
\cdot \xi +\pi i\left( d-1\right) /4\right) K\left( \sigma \left( \xi
\right) \right) ^{-1/2}.
\end{align*}

The remainder $E\left( \xi \right) $ satisfies the
estimates %
\[
\left\vert E\left( \xi \right) \right\vert +\left\vert \nabla E\left( \xi
\right) \right\vert \leq C\left\vert \xi \right\vert ^{-\left( d+3\right)
/2}.
\]
\end{lemma}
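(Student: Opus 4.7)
The plan is to convert the volume integral defining $\widehat{\chi}_{\Omega}(\xi)$ into a boundary integral via the divergence theorem, and then apply the method of stationary phase on the $(d-1)$-dimensional manifold $\partial\Omega$. For $\xi\neq 0$ one has $e^{-2\pi ix\cdot\xi}=-(2\pi i|\xi|^{2})^{-1}\operatorname{div}_{x}\bigl(\xi\,e^{-2\pi ix\cdot\xi}\bigr)$, so
\[
\widehat{\chi}_{\Omega}(\xi)=-\frac{1}{2\pi i|\xi|}\int_{\partial\Omega}e^{-2\pi i\sigma\cdot\xi}\,\frac{\xi}{|\xi|}\cdot n(\sigma)\,dS(\sigma),
\]
where $n(\sigma)$ is the outward unit normal. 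The resulting oscillatory integral has phase $\phi_{\xi}(\sigma)=-\sigma\cdot\xi/|\xi|$ and large parameter $\lambda=2\pi|\xi|$.

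Next I would carry out the stationary phase analysis. The critical points of $\phi_{\xi}$ on $\partial\Omega$ are exactly the points where $n(\sigma)$ is parallel to $\xi$; by strict convexity these form the antipodal pair $\sigma(\pm\xi)$, and the positive curvature hypothesis guarantees they are non-degenerate. At $\sigma(\xi)$ the body lies in the halfspace $\{x\cdot\xi\leq \sigma(\xi)\cdot\xi\}$, so $\phi_{\xi}$ has a strict minimum there with Hessian of signature $+(d-1)$ and determinant equal to the Gaussian curvature $K(\sigma(\xi))$; at $\sigma(-\xi)$ it has a strict maximum with opposite signature and determinant $K(\sigma(-\xi))$. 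The amplitude $(\xi/|\xi|)\cdot n(\sigma)$ takes the values $+1$ and $-1$ at these two points. The standard stationary phase expansion then gives
\[
\int_{\partial\Omega}e^{-2\pi i|\xi|\,\omega\cdot\sigma}\,\omega\cdot n(\sigma)\,dS=|\xi|^{-(d-1)/2}\sum_{\pm}\frac{(\pm 1)\,e^{-2\pi i\sigma(\pm\xi)\cdot\xi\mp i\pi(d-1)/4}}{K(\sigma(\pm\xi))^{1/2}}+O\bigl(|\xi|^{-(d+1)/2}\bigr),
\]
and multiplying by $-(2\pi i|\xi|)^{-1}$ collects the two leading contributions into exactly $a(\xi)|\xi|^{-(d+1)/2}$ as stated, with pointwise remainder $E(\xi)=O(|\xi|^{-(d+3)/2})$.

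For the gradient estimate, differentiating the explicit leading term produces terms of order $|\xi|^{-(d+1)/2}$ coming from $\nabla_{\xi}e^{-2\pi i\sigma(\pm\xi)\cdot\xi}$, which cannot be discarded and must instead exactly match the corresponding leading terms of $\nabla_{\xi}\widehat{\chi}_{\Omega}(\xi)=-2\pi i\,\widehat{x\chi_{\Omega}}(\xi)$. This matching works because of the envelope-type identity $\nabla_{\xi}\bigl(\sigma(\pm\xi)\cdot\xi\bigr)=\sigma(\pm\xi)$, valid because the differential of the Gauss map $\xi\mapsto\sigma(\pm\xi)$ lands in the tangent space to $\partial\Omega$ at $\sigma(\pm\xi)$, which is orthogonal to $\xi$. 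Thus $\nabla_{\xi}e^{-2\pi i\sigma(\pm\xi)\cdot\xi}=-2\pi i\sigma(\pm\xi)e^{-2\pi i\sigma(\pm\xi)\cdot\xi}$, and running the divergence theorem plus stationary phase on each component $\widehat{x_{j}\chi_{\Omega}}$ yields a principal term that agrees with $\nabla_{\xi}\bigl[a(\xi)|\xi|^{-(d+1)/2}\bigr]$ up to a discrepancy of order $|\xi|^{-(d+3)/2}$.

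The main obstacle is the bookkeeping in the remainder: one needs uniformity in $\omega\in S^{d-1}$ of the subleading $O(\lambda^{-1})$ term in stationary phase, which is ensured by the smooth positive lower bound on $K$, and one must verify that this subleading term is itself $C^{1}$ in $\xi$ with derivative of size $O(|\xi|^{-(d+3)/2})$. This follows from a Hörmander-type stationary phase lemma applied to the parameter-dependent phase $\phi_{\xi}$, but carrying it out carefully — tracking the dependence of the Morse lemma chart and the amplitude on $\xi$ — is where the bulk of the technical work lies.
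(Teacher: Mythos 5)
Your approach is the same as the paper's: divergence theorem to reduce to a surface oscillatory integral, stationary phase at the two boundary points $\sigma(\pm\xi)$ for the leading term, and, for the gradient bound, differentiating the decomposition and comparing $\nabla\widehat{\chi}_{\Omega}(\xi)=-2\pi i\,\widehat{x\chi_{\Omega}}(\xi)$ with $\nabla\bigl(a(\xi)|\xi|^{-(d+1)/2}\bigr)$ so that their common $O(|\xi|^{-(d+1)/2})$ principal parts cancel by the envelope identity $\nabla(\sigma(\xi)\cdot\xi)=\sigma(\xi)$. In the paper this matching is carried out by writing $x=\sigma(\vartheta)+(y,\Phi(y))$ in $\widehat{x\chi_{\Omega}}$, so the constant part $\sigma(\vartheta)$ reproduces the leading term of $\nabla\bigl(a|\xi|^{-(d+1)/2}\bigr)$ and the part $(y,\Phi(y))$ vanishes at the critical point and gains a power of $|\xi|^{-1}$ — exactly the mechanism you describe.

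One concrete error: in your displayed stationary phase expansion the phase factor is $e^{\mp i\pi(d-1)/4}$, but it should be $e^{\pm i\pi(d-1)/4}$. Your own reasoning gives the right sign: $\phi_{\xi}=-\sigma\cdot\xi/|\xi|$ has a nondegenerate \emph{minimum} at $\sigma(\xi)$ with positive definite Hessian and signature $+(d-1)$, so the stationary phase factor for $e^{i\lambda\phi_{\xi}}$ there is $e^{+i\pi(d-1)/4}$ (in the paper's normalization, $\prod_{k}(-i\mu_{k})^{-1/2}=e^{+i\pi(d-1)/4}K^{-1/2}$ for $\mu_{k}>0$); symmetrically $\sigma(-\xi)$ contributes $e^{-i\pi(d-1)/4}$. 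You can check this against the ball: the Bessel asymptotics give $a(\xi)=\pi^{-1}\cos\bigl(2\pi|\xi|-\pi(d+1)/4\bigr)=\pi^{-1}\sin\bigl(2\pi|\xi|-\pi(d-1)/4\bigr)$, whereas your sign yields $\pi^{-1}\sin\bigl(2\pi|\xi|+\pi(d-1)/4\bigr)$. With the sign corrected, multiplying by $-(2\pi i|\xi|)^{-1}$ gives exactly the stated $a(\xi)$.

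Finally, the concern in your last paragraph is unnecessary once the subtraction argument is in place. One does not need to show directly that the stationary phase remainder is $C^{1}$ with derivative $O(|\xi|^{-(d+3)/2})$: since $\widehat{\chi}_{\Omega}$ is entire of exponential type, the decomposition $\widehat{\chi}_{\Omega}=a|\xi|^{-(d+1)/2}+E$ can be differentiated term by term, and $\nabla E$ is expressed as the difference of two quantities each only $O(|\xi|^{-(d+1)/2})$ whose leading parts cancel. Only the one-term stationary phase estimate (uniform in the direction, from compactness of $\partial\Omega$ and the lower bound on $K$) is required.
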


\begin{proof} This is a classical result. See \cite{H1, H2, Hl}, or \cite[Corollary 7.7.15]{Ho}, or \cite[Chapter VIII]{St}.
 In particular, as shown before, the lemma for a ball follows straightly from the asymptotic
expansion of Bessel functions. Anyhow, since in most references the exact
constants in this asymptotic expansion are not explicit and a control on the
derivative of the remainder is omitted, it may be helpful to recall a proof.
Write $\xi =\rho \vartheta $, with $\rho>0$ and $|\vartheta|=1$, and denote by $n\left( x\right) $ the outward
unit normal to the boundary at the point $x$. By the divergence theorem, 
\[
\int_{\Omega }\exp \left( -2\pi i\rho \vartheta \cdot x\right)
dx=-\left( 2\pi i\rho \right) ^{-1}\int_{\partial \Omega %
}\vartheta \cdot n\left( x\right) \exp \left( -2\pi i\rho \vartheta \cdot
x\right) dx.
\]

In the surface integral the phase $\vartheta \cdot x$ is stationary at the
points $\sigma \left( \pm \vartheta \right) $ with normals $\pm \vartheta $,
and one can isolate these points with a smooth cutoff $\varphi \left( s\right) $%
, with $\varphi \left( s\right) =0$ if $s\leq 1-2\varepsilon $ and $\varphi \left(
s\right) =1$ if $s\geq 1-\varepsilon $ for some small positive $\varepsilon $, 
\begin{align*}
&\int_{\partial\Omega }\vartheta \cdot n\left( x\right) \exp
\left( -2\pi i\rho \vartheta \cdot x\right) dx \\
&=\int_{\partial \Omega }\varphi \left( \vartheta \cdot n\left(
x\right) \right) \vartheta \cdot n\left( x\right) \exp \left( -2\pi i\rho
\vartheta \cdot x\right) dx \\
& \ +\int_{\partial \Omega }\varphi \left( -\vartheta \cdot n\left(
x\right) \right) \vartheta \cdot n\left( x\right) \exp \left( -2\pi i\rho
\vartheta \cdot x\right) dx \\
& \ +\int_{\partial \Omega }\left( 1-\varphi \left( \vartheta \cdot
n\left( x\right) \right) -\varphi \left( -\vartheta \cdot n\left( x\right)
\right) \right) \vartheta \cdot n\left( x\right) \exp \left( -2\pi i\rho
\vartheta \cdot x\right) dx.
\end{align*}

Since in the domain of integration of the third integral there are no
critical points, this integral decays faster than any power $\rho ^{-N}$
when $\rho \rightarrow +\infty $, and the same is true for the derivatives
of this integral. The first and second integrals are similar to each other. Let us
consider the first one. By a suitable choice of the coordinates $x=\sigma
\left( \vartheta \right) +\left( y,z\right) $, with $y\in \mathbb{R}^{d-1}$
and $z\in \mathbb{R}$, one can move the singular point of the phase to the
origin, and one can assume that in a neighborhood of the origin the boundary 
$\partial \Omega $ is the graph $z=\Phi \left( y\right) $ and the unit
normal at the origin is $\left( 0,-1\right) $. In particular, $%
\nabla \Phi \left( 0\right) =0$. Then, setting $\left( 0,-1\right) \cdot
n\left( x\right) =n\left( y\right) $, one obtains 
\begin{align*}
&\int_{\partial \Omega }\varphi \left( \vartheta \cdot n\left(
x\right) \right) \vartheta \cdot n\left( x\right) \exp \left( -2\pi i\rho
\vartheta \cdot x\right) dx \\
&=\exp \left( -2\pi i\rho \sigma \left( \vartheta \right) \cdot \vartheta
\right) \int_{\mathbb{R}^{d-1}}\varphi \left( n\left( y\right) \right) n\left(
y\right) \exp \left( 2\pi i\rho \Phi \left( y\right) \right) \sqrt{%
1+\left\vert \nabla \Phi \left( y\right) \right\vert ^{2}}dy.
\end{align*}

By \cite[Proposition 6, Chapter VIII, \S 2]{St}, if $\left\{ \mu
_{k}\right\} _{k=1}^{d-1}$ are the eigenvalues of the Hessian matrix $\left[
\partial \Phi \left( y\right) /\partial y_{i}\partial y_{j}\right] $ at the
point $y=0$, then 
\begin{align*}
&\int_{\mathbb{R}^{d-1}}\varphi \left( n\left( y\right) \right) n\left(
y\right) \exp \left( 2\pi i\rho \Phi \left( y\right) \right) \sqrt{%
1+\left\vert \nabla \Phi \left( y\right) \right\vert ^{2}}dy \\
&=\rho ^{-\left( d-1\right) /2}\prod_{k=1}^{d-1}\left( -i\mu _{k}\right)
^{-1/2}+\mathcal{O}\left( \rho ^{-\left( d+1\right) /2}\right) .
\end{align*}

The eigenvalues of the Hessian matrix are the principal curvatures of $%
\partial \Omega $ at $\sigma \left( \vartheta \right) $, and the product of
these eigenvalues is the Gaussian curvature, 
\[
\prod_{k=1}^{d-1}\left( -i\mu _{k}\right) ^{-1/2}=\exp \left( \left(
d-1\right) \pi i/4\right) K\left( \sigma \left( \vartheta \right) \right)
^{-1/2}.
\]

Hence, 
\begin{align*}
&-\left( 2\pi i\rho \right) ^{-1}\int_{\partial \Omega }\varphi \left(
\vartheta \cdot n\left( x\right) \right) \vartheta \cdot n\left( x\right)
\exp \left( -2\pi i\rho \vartheta \cdot x\right) dx \\
&=-\left( 2\pi i\right) ^{-1}\exp \left( -2\pi i\sigma \left( \xi \right)
\cdot \xi +\left( d-1\right) \pi i/4\right) K\left( \sigma \left( \xi
\right) \right) ^{-1/2}\left\vert \xi \right\vert ^{-\left( d+1\right) /2} \\
& \ +\mathcal{O}\left( \left\vert \xi \right\vert ^{-\left( d+3\right)
/2}\right) .
\end{align*}

In order to obtain the main term in the asymptotic expansion one has to sum
the contribution of the point $\sigma \left( \vartheta \right) $ with the
one of the antipodal point $\sigma \left( -\vartheta \right) $. In this way
one obtains the decomposition 
\[
\widehat{\chi }_{\Omega }\left( \xi \right) =a\left( \xi \right) \left\vert
\xi \right\vert ^{-\left( d+1\right) /2}+E\left( \xi \right) .
\]

The remainder has the property $\left\vert E\left( \xi \right) \right\vert
\leq C\left\vert \xi \right\vert ^{-\left( d+3\right) /2}$ as $\left\vert
\xi \right\vert \rightarrow \infty $. Since $\widehat{\chi }_{\Omega }\left(
\xi \right) $ is an entire function of finite exponential type, the above
equality can be differentiated and one obtains 
\[
\nabla \widehat{\chi }_{\Omega }\left( \xi \right) =\left\vert \xi
\right\vert ^{-(d+1)/2}\nabla a\left( \xi \right) -
(({d+1})/2) a\left( \xi \right) \left\vert \xi \right\vert ^{-\left(
d+5\right) /2}\xi +\nabla E\left( \xi \right) .
\]

This is the same as 
\[
\nabla E\left( \xi \right) =\nabla \widehat{\chi }_{\Omega }\left( \xi
\right) -\left\vert \xi \right\vert ^{-\left( d+1\right) /2}\nabla a\left(
\xi \right) +(\left( d+1\right) /2)a\left( \xi \right) \left\vert \xi
\right\vert ^{-\left( d+5\right) /2}\xi .
\]

The term $(\left( d+1\right) /2)a\left( \xi \right) \left\vert \xi \right\vert
^{-\left( d+5\right) /2}\xi $ is $\mathcal{O}\left( \left\vert \xi
\right\vert ^{-\left( d+3\right) /2}\right) $, and both terms $\nabla 
\widehat{\chi }_{\Omega }\left( \xi \right) $ and $\left\vert \xi
\right\vert ^{-\left( d+1\right) /2}\nabla a\left( \xi \right) $ are $%
\mathcal{O}\left( \left\vert \xi \right\vert ^{-\left( d+1\right) /2}\right) 
$, but the main parts of these last terms are the same and they cancel, and
what is left is $\mathcal{O}\left( \left\vert \xi \right\vert ^{-\left(
d+3\right) /2}\right) $. Let us first identify the main part of $\left\vert
\xi \right\vert ^{-\left( d+1\right) /2}\nabla a\left( \xi \right) $ that
comes from the point $\sigma \left( \vartheta \right) $. Recall that $\sigma
\left( \xi \right) \cdot \xi =\sup_{x\in \Omega }\left\{ x\cdot \xi \right\} 
$, the support function of the convex body, has gradient $\nabla \left(
\sigma \left( \xi \right) \cdot \xi \right) =\sigma \left( \xi \right) $.
See \cite{BF}, or \cite[Corollary 1.7.3]{Sc}. Hence, 
\begin{align*}
&\nabla \left( -\left( 2\pi i\right) ^{-1}\exp \left( -2\pi i\sigma \left(
\xi \right) \cdot \xi +\left( d-1\right) \pi i/4\right) K\left( \sigma
\left( \xi \right) \right) ^{-1/2}\right) \\
&=-\left( 2\pi i\right) ^{-1}\exp \left( -2\pi i\sigma \left( \xi \right)
\cdot \xi +\left( d-1\right) \pi i/4\right) \nabla \left( K\left( \sigma
\left( \xi \right) \right) ^{-1/2}\right) \\
& \ +\exp \left( -2\pi i\sigma \left( \xi \right) \cdot \xi +\left( d-1\right)
\pi i/4\right) K\left( \sigma \left( \xi \right) \right) ^{-1/2}\sigma
\left( \xi \right) .
\end{align*}

Since $\sigma \left( \xi \right) $ is homogeneous of degree $0$, $\nabla
\left( K\left( \sigma \left( \xi \right) \right) ^{-1/2}\right) $ is
homogeneous of degree $-1$, so that the main contribution to $\left\vert \xi
\right\vert ^{-\left( d+1\right) /2}\nabla a\left( \xi \right) $ that comes
from the point $\sigma \left( \vartheta \right) $ is 
\[
\exp \left( -2\pi i\sigma \left( \xi \right) \cdot \xi +\left( d-1\right)
\pi i/4\right) K\left( \sigma \left( \xi \right) \right) ^{-1/2}\left\vert
\xi \right\vert ^{-\left( d+1\right) /2}\sigma \left( \xi \right) .
\]

Let us now identify the main part of $\nabla \widehat{\chi }_{\Omega }\left(
\xi \right) $ that comes from the point $\sigma \left( \vartheta \right) $.
The gradient $\nabla \widehat{\chi }_{\Omega }\left( \xi \right) $ is
defined by an integral similar to the one that defines $\widehat{\chi }%
_{\Omega }\left( \xi \right) $, and it has a similar asymptotic expansion, 
\begin{align*}
&\nabla \left( \int_{\Omega }\exp \left( -2\pi i\xi \cdot x\right)
dx\right) =-2\pi i\int_{\Omega }x\exp \left( -2\pi i\xi \cdot
x\right) dx \\
&=-\left\vert \xi \right\vert ^{-2}\xi \int_{\Omega }\exp \left(
-2\pi i\xi \cdot x\right) dx+\left\vert \xi \right\vert ^{-2}\int_{\partial 
\Omega }x\exp \left( -2\pi i\xi \cdot x\right) \xi \cdot n\left(
x\right) dx.
\end{align*}

The first integral is similar to the previous one, but the factor $\left\vert \xi \right\vert ^{-2}\xi $ gives an extra decay, 
\[
\left\vert \left\vert \xi \right\vert ^{-2}\xi \int_{\Omega }x\exp
\left( -2\pi i\xi \cdot x\right) dx\right\vert \leq C\left\vert \xi
\right\vert ^{-\left( d+3\right) /2}.
\]

Arguing as before and isolating the critical point $\sigma \left( \vartheta
\right) $, with the change of variables $x=\sigma \left( \vartheta \right)
+\left( y,z\right) $ one obtains 
\begin{align*}
&\rho ^{-1}\int_{\partial \Omega }x\varphi \left( \vartheta \cdot
n\left( x\right) \right) \vartheta \cdot n\left( x\right) \exp \left( -2\pi
i\rho \vartheta \cdot x\right) dx= \\
&\rho ^{-1}\exp \left( -2\pi i\rho \sigma \left( \vartheta \right) \cdot
\vartheta \right) \int_{\mathbb{R}^{d-1}}\left( y,\Phi \left( y\right)
\right) \varphi \left( n\left( y\right) \right) n\left( y\right) \exp \left(
2\pi i\rho \Phi \left( y\right) \right) \sqrt{1+\left\vert \nabla \Phi
\left( y\right) \right\vert ^{2}}dy \\
&+\rho ^{-1}\exp \left( -2\pi i\rho \sigma \left( \vartheta \right) \cdot
\vartheta \right) \sigma \left( \vartheta \right) \int_{\mathbb{R}%
^{d-1}}\varphi \left( n\left( y\right) \right) n\left( y\right) \exp \left(
2\pi i\rho \Phi \left( y\right) \right) \sqrt{1+\left\vert \nabla \Phi
\left( y\right) \right\vert ^{2}}dy.
\end{align*}

In the first integral the factor $\left( y,\Phi \left( y\right) \right) $
vanishes at the singular point $y=0$ of the phase, and this implies that 
\begin{align*}
&\left\vert \rho ^{-1}\exp \left( -2\pi i\rho \sigma \left( \vartheta \right)
\cdot \vartheta \right) \int_{\mathbb{R}^{d-1}}\left( y,\Phi \left(
y\right) \right) \varphi \left( n\left( y\right) \right) n\left( y\right) \exp
\left( 2\pi i\rho \Phi \left( y\right) \right) \sqrt{1+\left\vert \nabla
\Phi \left( y\right) \right\vert ^{2}}dy\right\vert \\
&\leq C\rho ^{-\left( d+3\right) /2}.
\end{align*}

The second integral is exactly the same that appears in the computation of $\widehat{\chi }_{\Omega }\left( \xi \right) $, 
\begin{align*}
&\rho ^{-1}\exp \left( -2\pi i\rho \sigma \left( \vartheta \right) \cdot
\vartheta \right) \sigma \left( \vartheta \right) \int_{\mathbb{R}%
^{d-1}}\varphi \left( n\left( y\right) \right) n\left( y\right) \exp \left(
2\pi i\rho \Phi \left( y\right) \right) \sqrt{1+\left\vert \nabla \Phi
\left( y\right) \right\vert ^{2}}dy \\
&=\exp \left( -2\pi i\rho \sigma \left( \vartheta \right) \cdot \vartheta
+\left( d-1\right) \pi i/4\right) K\left( \sigma \left( \vartheta \right)
\right) ^{-1/2}\rho ^{-\left( d+1\right) /2}\sigma \left( \vartheta \right) +%
\mathcal{O}\left( \rho ^{-\left( d+3\right) /2}\right) .
\end{align*}

In conclusion, the main parts of $\nabla \widehat{\chi }_{\Omega }\left( \xi
\right) $ and $\left\vert \xi \right\vert ^{-\left( d+1\right) /2}\nabla
a\left( \xi \right) $ cancel, and all that is left is $\mathcal{O}\left(
\left\vert \xi \right\vert ^{-\left( d+3\right) /2}\right) $.
\end{proof}

\begin{lemma}\label{L3}
The Fourier transform of the annulus $\Omega
\left( r,t\right) =\left( r+t/2\right) \Omega \setminus \left( r-t/2\right) \Omega$ can be decomposed into
\[
\widehat{\chi }_{\Omega \left( r,t\right) }\left( \xi \right) =A\left(
r,t,\xi \right) +B\left( r,t,\xi \right) .
\]
The main term is 
\begin{align*}
& A\left( r,t,\xi \right) = \\
&  -\pi ^{-1}r^{\left( d-1\right) /2}\left\vert \xi \right\vert ^{-\left(
d+1\right) /2}\exp \left( -2\pi ir\sigma \left( -\xi \right) \cdot \xi -\pi
i\left( d-1\right) /4\right) K\left( \sigma \left( -\xi \right) \right)
^{-1/2}\sin \left( \pi t\sigma \left( -\xi \right) \cdot \xi \right) \\
& +\pi ^{-1}r^{\left( d-1\right) /2}\left\vert \xi \right\vert ^{-\left(
d+1\right) /2}\exp \left( -2\pi ir\sigma \left( \xi \right) \cdot \xi +\pi
i\left( d-1\right) /4\right) K\left( \sigma \left( \xi \right) \right)
^{-1/2}\sin \left( \pi t\sigma \left( \xi \right) \cdot \xi \right) .
\end{align*}
The remainder has the property that there exists $C>0$ such that for every $r|\xi|\ge 1$ and for every $0< t \leq r$, 
\[
\left\vert B\left( r,t,\xi \right) \right\vert \leq Cr^{\left( d-3\right)
/2}t\left\vert \xi \right\vert ^{-\left( d+1\right) /2}.
\]
\end{lemma}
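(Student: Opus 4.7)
The plan is to write $\widehat{\chi}_{\Omega(r,t)}(\xi) = \widehat{\chi}_{(r+t/2)\Omega}(\xi) - \widehat{\chi}_{(r-t/2)\Omega}(\xi)$, apply the dilation identity $\widehat{\chi}_{s\Omega}(\xi) = s^d\widehat{\chi}_\Omega(s\xi)$, and then insert the asymptotic expansion of Lemma \ref{L2}. Because the support point $\sigma(\pm\xi)$ and the Gaussian curvature $K(\sigma(\pm\xi))$ depend only on the direction of $\xi$, all the $s$-dependence in $a(s\xi)$ is concentrated in the phases $\exp(-2\pi is\,\sigma(\pm\xi)\cdot\xi)$, while the prefactor reduces to $s^d|s\xi|^{-(d+1)/2} = s^{(d-1)/2}|\xi|^{-(d+1)/2}$.

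I would then freeze the prefactor by splitting each summand $(r\pm t/2)^{(d-1)/2}|\xi|^{-(d+1)/2} a((r\pm t/2)\xi)$ into the frozen piece $r^{(d-1)/2}|\xi|^{-(d+1)/2} a((r\pm t/2)\xi)$ plus an error coming from the difference $(r\pm t/2)^{(d-1)/2} - r^{(d-1)/2}$. Taking the difference of the two frozen terms and using
\[
e^{-2\pi i(r+t/2)z} - e^{-2\pi i(r-t/2)z} = -2i\,e^{-2\pi irz}\sin(\pi tz)
\]
with $z = \sigma(\pm\xi)\cdot\xi$, together with $(2\pi i)^{-1}(-2i) = -\pi^{-1}$, reproduces exactly the formula for $A(r,t,\xi)$ in the statement.

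Everything else is absorbed into $B(r,t,\xi)$, and has two sources. The first is the prefactor mismatch: since $0 < t \leq r$, the mean value theorem gives $|(r\pm t/2)^{(d-1)/2} - r^{(d-1)/2}| \leq Cr^{(d-3)/2}t$, which multiplied by the bounded $a(\cdot)$ and by $|\xi|^{-(d+1)/2}$ gives an $O(r^{(d-3)/2}t|\xi|^{-(d+1)/2})$ contribution. The second and more delicate source is the remainder difference, for which I would write
\[
(r+t/2)^d E((r+t/2)\xi) - (r-t/2)^d E((r-t/2)\xi) = \int_{r-t/2}^{r+t/2}\bigl(d\,s^{d-1}E(s\xi) + s^d\,\xi\cdot\nabla E(s\xi)\bigr)\,ds
\]
and then apply both bounds $|E(\eta)| + |\nabla E(\eta)| \leq C|\eta|^{-(d+3)/2}$ from Lemma \ref{L2} at $\eta = s\xi$ with $s\sim r$. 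The two integrand terms are of size $O(r^{(d-5)/2}|\xi|^{-(d+3)/2})$ and $O(r^{(d-3)/2}|\xi|^{-(d+1)/2})$ respectively; the hypothesis $r|\xi|\geq 1$ converts the first into the same form as the second, and integrating over an interval of length $t$ yields the desired bound. This last step is the main obstacle and explains why Lemma \ref{L2} is stated with derivative control on $E$: without that gradient bound, one would only obtain an error of order $r^{(d-3)/2}|\xi|^{-(d+3)/2}$ on the $E$-difference, which is insufficient when $|\xi|\sim 1/r$.
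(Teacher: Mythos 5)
Your proof is correct and follows essentially the same path as the paper: the dilation identity, freezing the prefactor at $r^{(d-1)/2}$, exploiting the homogeneity of $\sigma$ to reduce the $a$-difference to a pure phase difference and the identity $e^{-2\pi i(r+t/2)z}-e^{-2\pi i(r-t/2)z}=-2i\,e^{-2\pi irz}\sin(\pi tz)$, and using the gradient bound on $E$ from Lemma \ref{L2} together with $r|\xi|\geq 1$ to absorb the remainders. The only cosmetic difference is that you control the $E$-part via a single fundamental-theorem-of-calculus integral in $s$, whereas the paper splits it into $(r+t/2)^d\bigl(E((r+t/2)\xi)-E((r-t/2)\xi)\bigr)+\bigl((r+t/2)^d-(r-t/2)^d\bigr)E((r-t/2)\xi)$ and applies the mean value theorem to the first piece; the two are equivalent, and your closing remark correctly identifies why the $\nabla E$ bound, not just the $E$ bound, is indispensable.
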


\begin{proof} With the notation of the previous lemma $\widehat{\chi }_{\Omega }\left( \xi
\right) =a\left( \xi \right) \left\vert \xi \right\vert ^{-\left( d+1\right)
/2}+E\left( \xi \right) $, 
\begin{align*}
\widehat{\chi }_{\Omega \left( r,t\right) }\left( \xi \right)& =\left(
r+t/2\right) ^{d}\widehat{\chi }_{\Omega }\left( \left( r+t/2\right) \xi
\right) -\left( r-t/2\right) ^{d}\widehat{\chi }_{\Omega }\left( \left(
r-t/2\right) \xi \right) \\
&=r^{\left( d-1\right) /2}\left( a\left( \left( r+t/2\right) \xi \right)
-a\left( \left( r-t/2\right) \xi \right) \right) \left\vert \xi \right\vert
^{-\left( d+1\right) /2} \\
& \ +\left( \left( r+t/2\right) ^{\left( d-1\right) /2}-r^{\left( d-1\right)
/2}\right) a\left( \left( r+t/2\right) \xi \right) \left\vert \xi
\right\vert ^{-\left( d+1\right) /2} \\
& \ -\left( \left( r-t/2\right) ^{\left( d-1\right) /2}-r^{\left( d-1\right)
/2}\right) a\left( \left( r-t/2\right) \xi \right) \left\vert \xi
\right\vert ^{-\left( d+1\right) /2} \\
& \ +\left( r+t/2\right) ^{d}\left( E\left( \left( r+t/2\right) \xi \right)
-E\left( \left( r-t/2\right) \xi \right) \right) \\
& \ +\left( \left( r+t/2\right) ^{d}-\left( r-t/2\right) ^{d}\right) E\left(
\left( r-t/2\right) \xi \right) .
\end{align*}

The estimates on $E\left( \xi \right) $ and on $\nabla E\left( \xi \right) $
give 
\begin{align*}
\left\vert \left( \left( r+t/2\right) ^{d}-\left( r-t/2\right) ^{d}\right)
E\left( \left( r-t/2\right) \xi \right) \right\vert 
&\leq Cr^{\left( d-5\right) /2}t\left\vert \xi \right\vert ^{-\left(
d+3\right) /2},\\
\left\vert \left( r+t/2\right) ^{d}\left( E\left( \left( r+t/2\right) \xi
\right) -E\left( \left( r-t/2\right) \xi \right) \right) \right\vert  
&\leq Cr^{\left( d-3\right) /2}t\left\vert \xi \right\vert ^{-\left(
d+1\right) /2}.
\end{align*}

Similarly, one also has 
\[
\left\vert \left( \left( r\pm t/2\right) ^{\left( d-1\right)
/2}-r^{\left( d-1\right) /2}\right) a\left( \left( r\pm t/2\right) \xi
\right) \left\vert \xi \right\vert ^{-\left( d+1\right) /2}\right\vert  
\leq Cr^{\left( d-3\right) /2}t\left\vert \xi \right\vert ^{-\left(
d+1\right) /2}.
\]

The main term comes from $a\left( \left( r+t/2\right) \xi \right) -a\left(
\left( r-t/2\right) \xi \right) $, and it needs a slightly more precise
analysis. Since $\sigma \left( \pm \xi \right) $ is homogeneous of degree
zero, one has $\sigma \left( \pm \left( r\pm t/2\right) \xi \right) =\sigma \left(
\pm \xi \right) $, and a little computation gives 
\begin{align*}
&a\left( \left( r+t/2\right) \xi \right) -a\left( \left( r-t/2\right) \xi
\right) = \\
&-\pi ^{-1}\exp \left( -2\pi ir\sigma \left( -\xi \right) \cdot \xi -\pi
i\left( d-1\right) /4\right) K\left( \sigma \left( -\xi \right) \right)
^{-1/2}\sin \left( \pi t\sigma \left( -\xi \right) \cdot \xi \right)  \\
&+\pi ^{-1}\exp \left( -2\pi ir\sigma \left( \xi \right) \cdot \xi +\pi
i\left( d-1\right) /4\right) K\left( \sigma \left( \xi \right) \right)
^{-1/2}\sin \left( \pi t\sigma \left( \xi \right) \cdot \xi \right) .
\end{align*}
\end{proof}

At this point one can already show that the variance is bounded up to a
constant by the mean. Indeed, it follows from the above lemma that if $t\leq r$ and $r|\xi| \geq 1$, then
\[
\left\vert \widehat{\chi }_{\Omega \left( r,t\right) }\left( \xi \right)
\right\vert \leq Cr^{\left( d-1\right) /2}\left\vert \xi \right\vert
^{-\left( d+1\right) /2}\min \left\{ 1,t\left\vert \xi \right\vert \right\} .
\]

Hence, by Parseval's equality, 
\begin{align*}
&\int_{\mathbb{T}^{d}}\left\vert \sum_{k\in \mathbb{Z}^{d}}\chi _{\Omega
\left( r,t\right) -x}(k)-\left\vert \Omega \left( r,t\right) \right\vert
\right\vert ^{2}dx=\sum_{n\in \mathbb{Z}^{d}\setminus\left\{ 0\right\} }\left\vert 
\widehat{\chi }_{\Omega \left( r,t\right) }\left( n\right) \right\vert ^{2}
\\
&\leq Cr^{d-1}t^{2}\sum_{0<\left\vert n\right\vert \leq 1/t}\left\vert
n\right\vert ^{1-d}+Cr^{d-1}\sum_{1/t<\left\vert n\right\vert <+\infty
}\left\vert n\right\vert ^{-1-d}\leq Cr^{d-1}t.
\end{align*}

Proving an asymptotic estimate of the variance is a more difficult task. One has
to take into account not only the size of the Fourier transform, but also
the oscillations. In particular, the curvature $K\left( x\right) $ and the
support function $\sup_{y\in \Omega }\left\{ x\cdot y\right\} $, which
determine the geometry of the convex body, will play a crucial role.

\begin{lemma}\label{L4}
The variance of the number of integer points in
the shifted annulus can be decomposed into 
\[
\int_{\mathbb{T}^{d}}\left\vert \sum_{k\in \mathbb{Z}^{d}}\chi _{\Omega
\left( r,t\right) -x}(k)-\left\vert \Omega \left( r,t\right) \right\vert
\right\vert ^{2}dx=X\left( r,t\right) +Y\left( r,t\right) +Z\left(
r,t\right),
\]
where
\begin{align*}
X\left( r,t\right) &=2\pi ^{-2}r^{d-1}\sum_{n\in \mathbb{Z}^{d}\setminus \left\{
0\right\} }K\left( \sigma \left( n\right) \right) ^{-1}\sin ^{2}\left( \pi
t\sigma \left( n\right) \cdot n\right) \left\vert n\right\vert ^{-d-1},\\
Y\left( r,t\right) &=-2\pi ^{-2}r^{d-1}\sum_{n\in \mathbb{Z}^{d}\setminus \left\{
0\right\} }\cos \left( 2\pi r\left( \sigma \left( n\right) -\sigma \left(
-n\right) \right) \cdot n-\pi \left( d-1\right) /2\right) \\
&\times K\left( \sigma \left( n\right) \right) ^{-1/2}K\left( \sigma \left(
-n\right) \right) ^{-1/2}\sin \left( \pi t\sigma \left( n\right) \cdot
n\right) \sin \left( \pi t\sigma \left( -n\right) \cdot n\right) \left\vert
n\right\vert ^{-d-1}.
\end{align*}

The remainder $Z\left( r,t\right) $ has the property that
there exists a constant $C$ such if $r\ge 1$ and $t\leq r$ then 
\[
\left\vert Z\left( r,t\right) \right\vert \leq C\left\vert \Omega \left(
r,t\right) \right\vert r^{-1}t\log \left( 2+1/t\right) .
\]
\end{lemma}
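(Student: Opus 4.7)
The plan is to combine Parseval's identity from Lemma \ref{L1} with the Fourier expansion of the annulus from Lemma \ref{L3}. Starting from
\[
\int_{\mathbb{T}^{d}}\Bigl|\sum_{k\in \mathbb{Z}^{d}}\chi _{\Omega(r,t)-x}(k)-|\Omega(r,t)|\Bigr|^{2}\,dx=\sum_{n\in \mathbb{Z}^{d}\setminus\{0\}}|\widehat{\chi}_{\Omega(r,t)}(n)|^{2},
\]
I would substitute $\widehat{\chi}_{\Omega(r,t)}(n)=A(r,t,n)+B(r,t,n)$, so that
\[
|\widehat{\chi}_{\Omega(r,t)}(n)|^{2}=|A(r,t,n)|^{2}+2\operatorname{Re}\bigl(A(r,t,n)\overline{B(r,t,n)}\bigr)+|B(r,t,n)|^{2}.
\]
Since $r\ge1$ and $n\in\mathbb{Z}^{d}\setminus\{0\}$, one has $r|n|\ge1$, so the estimate $|B(r,t,n)|\le C r^{(d-3)/2}t\,|n|^{-(d+1)/2}$ from Lemma \ref{L3} applies at every lattice point.

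Next I would write $A=A_{+}+A_{-}$, where $A_{\pm}$ denote the two terms in Lemma \ref{L3} corresponding to $\sigma(\pm n)$, and expand $|A|^{2}=|A_{+}|^{2}+|A_{-}|^{2}+2\operatorname{Re}(A_{+}\overline{A_{-}})$. The diagonal pieces $|A_{\pm}|^{2}$ are non-oscillatory in $r$: each is of the form $\pi^{-2}r^{d-1}|n|^{-(d+1)}K(\sigma(\pm n))^{-1}\sin^{2}(\pi t\sigma(\pm n)\cdot n)$. Summing over $n$ and using the change of variables $n\mapsto-n$ (which preserves $|n|^{-(d+1)}$ and turns the $\sigma(-n)$-term into the $\sigma(n)$-term, since $\sin^{2}$ is even) produces exactly $X(r,t)$. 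The cross term $2\operatorname{Re}(A_{+}\overline{A_{-}})$ contains the phase $\exp(-2\pi ir(\sigma(n)-\sigma(-n))\cdot n+\pi i(d-1)/2)$ plus its conjugate, which combine via Euler's formula into the cosine appearing in $Y(r,t)$, while the amplitudes $K(\sigma(\pm n))^{-1/2}$ and the two sine factors assemble into the rest of $Y(r,t)$.

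The bulk of the work is controlling the remainder $Z(r,t)=\sum_{n\ne0}\bigl(2\operatorname{Re}(A\overline{B})+|B|^{2}\bigr)$. From the explicit form of $A$ together with $|\sin(\pi t\sigma(\pm n)\cdot n)|\le C\min(1,t|n|)$ one gets $|A(r,t,n)|\le Cr^{(d-1)/2}|n|^{-(d+1)/2}\min(1,t|n|)$, and combining with the bound on $B$ yields
\[
|A(r,t,n)\,B(r,t,n)|\le Cr^{d-2}t\,|n|^{-(d+1)}\min(1,t|n|),\qquad |B(r,t,n)|^{2}\le Cr^{d-3}t^{2}|n|^{-(d+1)}.
\]
Splitting the resulting lattice sum at $|n|=1/t$ gives
\[
\sum_{n\ne0}|n|^{-(d+1)}\min(1,t|n|)=t\sum_{1\le|n|\le1/t}|n|^{-d}+\sum_{|n|>1/t}|n|^{-(d+1)}\le C\,t\log(2+1/t),
\]
since lattice-point counting in dyadic shells gives $\sum_{1\le|n|\le R}|n|^{-d}\asymp\log R$, while the tail sum is $\asymp t$. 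Hence $\sum_{n\ne0}|A\overline{B}|\le Cr^{d-2}t^{2}\log(2+1/t)$, and the $|B|^{2}$ sum is an even smaller $O(r^{d-3}t^{2})$. Using $|\Omega(r,t)|\sim dr^{d-1}t|\Omega|$ converts both bounds into $C|\Omega(r,t)|r^{-1}t\log(2+1/t)$, as required. The main obstacle is this logarithmic contribution from the borderline-divergent sum $\sum_{|n|\le1/t}|n|^{-d}$; everywhere else one has genuine power savings, but at the critical exponent the $\log(2+1/t)$ factor is unavoidable.
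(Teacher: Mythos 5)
Your proof is correct and follows essentially the same route as the paper: substitute the $A+B$ decomposition of Lemma \ref{L3} into the Parseval sum from Lemma \ref{L1}, identify $X+Y$ with $\sum_{n\ne0}|A|^2$ via the split $A=A_++A_-$, and bound the cross and $|B|^2$ sums by the same dyadic splitting at $|n|\sim1/t$. The only difference is that you spell out the "one can check" step identifying $\sum|A|^2$ with $X+Y$ (the $n\mapsto-n$ symmetry giving $X$ from the diagonal pieces and the phase combination giving the cosine in $Y$), which the paper leaves to the reader.
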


\begin{proof} By Lemma \ref{L1} and Lemma \ref{L3}, the variance equals 
\begin{align*}
\sum_{n\in \mathbb{Z}^{d}\setminus\left\{ 0\right\} }\left\vert \widehat{\chi }%
_{\Omega \left( r,t\right) }\left( n\right) \right\vert ^{2}&= 
\sum_{n\in \mathbb{Z}^{d}\setminus\left\{ 0\right\} }A\left( r,t,n\right) \overline{%
A\left( r,t,n\right) }+\sum_{n\in \mathbb{Z}^{d}\setminus\left\{ 0\right\} }A\left(
r,t,n\right) \overline{B\left( r,t,n\right) } \\
&+\sum_{n\in \mathbb{Z}^{d}\setminus\left\{ 0\right\} }B\left( r,t,n\right) 
\overline{A\left( r,t,n\right) }+\sum_{n\in \mathbb{Z}^{d}\setminus\left\{
0\right\} }B\left( r,t,n\right) \overline{B\left( r,t,n\right) }.
\end{align*}

Since $c\left\vert n\right\vert \leq  \sigma \left( n\right) \cdot
n \leq C\left\vert n\right\vert $ for some $C\geq c>0$, Lemma \ref{L3} implies that 
\begin{align*}
\left\vert A\left( r,t,n\right) \right\vert &\leq Cr^{\left( d-1\right)
/2}\left\vert n\right\vert ^{-\left( d+1\right) /2}\min \left\{
1,t\left\vert n\right\vert \right\} ,\\
\left\vert B\left( r,t,n\right) \right\vert &\leq Cr^{\left( d-3\right)
/2}t\left\vert n\right\vert ^{-\left( d+1\right) /2}.
\end{align*}

These estimates give 
\begin{align*}
\sum_{n\in \mathbb{Z}^{d}\setminus\left\{ 0\right\} }\left\vert A\left(
r,t,n\right) \right\vert \left\vert B\left( r,t,n\right) \right\vert 
&\leq Cr^{d-2}t^{2}\sum_{0<\left\vert n\right\vert \leq 1/t}\left\vert
n\right\vert ^{-d}+Cr^{d-2}t\sum_{1/t<\left\vert n\right\vert <+\infty
}\left\vert n\right\vert ^{-d-1}\\
&\leq Cr^{d-2}t^{2}\log \left( 2+1/t\right) ,
\end{align*}
and
\[
\sum_{n\in \mathbb{Z}^{d}\setminus\left\{ 0\right\} }\left\vert B\left(
r,t,n\right) \right\vert ^{2}\leq Cr^{d-3}t^{2}\sum_{n\in \mathbb{Z}%
^{d}\setminus\left\{ 0\right\} }\left\vert n\right\vert ^{-d-1}\leq Cr^{d-3}t^{2}.
\]

The main term is $\sum_{n\in \mathbb{Z}^{d}\setminus\left\{ 0\right\} }\left\vert
A\left( r,t,n\right) \right\vert ^{2}$, and one can check that it is equal
to $X\left( r,t\right) +Y\left( r,t\right) $. 
\end{proof}

It follows from the Cauchy-Schwarz inequality that in the statement of the
above lemma the series $Y\left( r,t\right) $ with cosines is smaller than
the series $X\left( r,t\right) $. 
 Moreover, the cancellations due to the
change of sign of the cosine lead to conjecture that $Y\left( r,t\right) $
is indeed much smaller than $X\left( r,t\right) $, and it gives a negligible
contribution to the variance. Also observe that the single terms in the
expansions $X\left( r,t\right) $ and $Y\left( r,t\right) $ give negligible
contributions to the series. This suggests that these series are asymptotic
to integrals, and at least for $X\left( r,t\right) $ this is the case.

\begin{lemma}\label{L5}
If $\left\vert \Omega \right\vert $ is the volume of the convex body, and with the definition of $X\left( r,t\right) $ in Lemma \ref{L4}, 
we have
\[
X\left( r,t\right) =d\left\vert \Omega \right\vert r^{d-1}t+W\left(
r,t\right) .
\]

The remainder $W\left( r,t\right) $ has the property that for
some $C$ and every $r\ge 1$ and $t\leq r$, 
\[
\left\vert W\left( r,t\right) \right\vert \leq C\left\vert \Omega \left(
r,t\right) \right\vert t\log \left( 2+1/t\right) .
\]
\end{lemma}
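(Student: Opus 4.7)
The strategy is to recognise $X(r,t)$ as $2\pi^{-2}r^{d-1}$ times a Riemann-type sum of the function
\[
G(x):=K(\sigma(x))^{-1}\sin^{2}\!\bigl(\pi t\,\sigma(x)\cdot x\bigr)\,|x|^{-d-1},\qquad x\in\mathbb{R}^d\setminus\{0\},
\]
where $\sigma$ is extended as a $0$-homogeneous map, and to compare the lattice sum $\sum_{n\neq 0}G(n)$ to the continuous integral $\int_{\mathbb{R}^d}G$. The integral will produce the main term $d|\Omega|r^{d-1}t$; the error $W(r,t)$ will be the Riemann-sum discrepancy, multiplied by the prefactor $2\pi^{-2}r^{d-1}$.

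First I would compute $\int_{\mathbb{R}^d}G$. In spherical coordinates $x=\rho\theta$, using the classical identity $\int_0^\infty \sin^2(a\rho)\rho^{-2}d\rho=\pi a/2$ for $a>0$,
\[
\int_{\mathbb{R}^d}G\,dx=\frac{\pi^{2}t}{2}\int_{S^{d-1}}K(\sigma(\theta))^{-1}h(\theta)\,d\theta,
\]
where $h(\theta)=\sigma(\theta)\cdot\theta$ is the support function. The divergence theorem applied to the vector field $x/d$, combined with the change of variables via the Gauss map (whose Jacobian is $K$), gives the classical identity $\int_{S^{d-1}}h\,K^{-1}\,d\theta=d|\Omega|$; hence $2\pi^{-2}r^{d-1}\int G=d|\Omega|r^{d-1}t$, the asserted main term.

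Next, to bound $E:=\sum_{n\neq 0}G(n)-\int G$, I would use a standard Riemann-sum estimate: writing $\int G=\sum_{n\in\mathbb{Z}^d}\int_{Q_n}G$ with $Q_n=n+[-\tfrac12,\tfrac12)^d$, one has $|G(n)-\int_{Q_n}G|\le C\sup_{Q_n}|\nabla G|$ for every $n\neq 0$ (the cubes $Q_n$ avoid the singularity, since $|x|\ge 1/2$ on them), while $|\int_{Q_0}G|\le Ct^2$ because the bound $\sin^2(\pi t\,\sigma(x)\cdot x)\le Ct^2|x|^2$ softens the $|x|^{-d-1}$ singularity at the origin to an integrable $|x|^{1-d}$. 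Using $0$-homogeneity of $K(\sigma(\cdot))^{-1}$, the bound $|\nabla(\sigma(x)\cdot x)|\le C$ (the gradient of the support function is the corresponding boundary point), and $|\sin(\pi t\,\sigma(x)\cdot x)|\le\min\{1,Ct|x|\}$, one obtains
\[
|\nabla G(x)|\le C\min\{1,(t|x|)^{2}\}|x|^{-d-2}+Ct\min\{1,t|x|\}|x|^{-d-1}.
\]

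Splitting the sum $\sum_{n\neq 0}\sup_{Q_n}|\nabla G|$ at $|n|\sim 1/t$: in the range $1\le|n|\le 1/t$ both terms contribute of order $t^2|n|^{-d}$, and since $\sum_{1\le|n|\le 1/t}|n|^{-d}\asymp\log(2+1/t)$ this piece totals $Ct^2\log(2+1/t)$; in the range $|n|>1/t$ the two terms contribute $|n|^{-d-2}$ and $t|n|^{-d-1}$ respectively, each summing to $Ct^2$. Together with the boundary term $\int_{Q_0}G$, this yields $|E|\le Ct^2\log(2+1/t)$, and consequently $|W|\le Cr^{d-1}t^2\log(2+1/t)\le C|\Omega(r,t)|t\log(2+1/t)$, using $|\Omega(r,t)|\asymp dr^{d-1}t|\Omega|$ for $t\le r$. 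The main obstacle is the critical sum $\sum_{|n|\le 1/t}|n|^{-d}$, which sits at the borderline of summability and forces the logarithmic factor in the statement; carefully controlling the singularity of $G$ at the origin so that it does not spoil the Riemann-sum comparison is the key technical point.
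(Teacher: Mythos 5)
Your proposal is correct and follows essentially the same path as the paper: both compare the lattice sum defining $X(r,t)$ to the corresponding integral over $\mathbb{R}^d$, evaluate that integral in polar coordinates via $\int_0^\infty \sin^2(s)s^{-2}\,ds=\pi/2$ and the Gauss-map identity $\int_{S^{d-1}}K^{-1}h\,d\theta=d|\Omega|$, and then estimate the Riemann-sum discrepancy cube by cube. The only cosmetic difference is that you package the cube-wise error as a single $\sup_{Q_n}|\nabla G|$ bound, whereas the paper unravels it by telescoping the three factors $K(\sigma(\cdot))^{-1}$, $\sin^2(\pi t\,\sigma(\cdot)\cdot\,\cdot)$, $|\cdot|^{-d-1}$ one at a time — the same estimate, just written out.
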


\begin{proof} Identifying the torus $\mathbb{T}^{d}=\mathbb{R}^{d}/\mathbb{%
Z}^{d}$ with the unit cube $\left\{ -1/2\leq x_{j}<1/2\right\} $ and
decomposing $\mathbb{R}^{d}$ into $\bigcup_{n\in \mathbb{Z}^{d}}\left\{ 
\mathbb{T}^{d}+n\right\} $, one gets 
\begin{align*}
X(r,t)&=2\pi ^{-2}r^{d-1}\sum_{n\in \mathbb{Z}^{d}\setminus\left\{ 0\right\} }K\left(
\sigma \left( n\right) \right) ^{-1}\sin ^{2}\left( \pi t\sigma \left(
n\right) \cdot n\right) \left\vert n\right\vert ^{-d-1} \\
&=2\pi ^{-2}r^{d-1}\int_{\mathbb{R}^{d}}K\left( \sigma \left( x\right)
\right) ^{-1}\sin ^{2}\left( \pi t\sigma \left( x\right) \cdot x\right)
\left\vert x\right\vert ^{-d-1}dx \\
&-2\pi ^{-2}r^{d-1}\int_{\mathbb{T}^{d}}K\left( \sigma \left( x\right)
\right) ^{-1}\sin ^{2}\left( \pi t\sigma \left( x\right) \cdot x\right)
\left\vert x\right\vert ^{-d-1}dx \\
&-2\pi ^{-2}r^{d-1}\sum_{n\in \mathbb{Z}^{d}\setminus\left\{ 0\right\} }\int_{%
\mathbb{T}^{d}}\left( K\left( \sigma \left( n+x\right) \right) ^{-1}-K\left(
\sigma \left( n\right) \right) ^{-1}\right)\\
& \quad \times \sin ^{2}\left( \pi t\sigma
\left( n+x\right) \cdot \left( n+x\right) \right) \left\vert n+x\right\vert
^{-d-1}dx \\
&-2\pi ^{-2}r^{d-1}\sum_{n\in \mathbb{Z}^{d}\setminus\left\{ 0\right\} }K\left(
\sigma \left( n\right) \right) ^{-1}\int_{\mathbb{T}^{d}}\left\vert n+x\right\vert ^{-d-1}\\
& \quad \times \left( \sin
^{2}\left( \pi t\sigma \left( n+x\right) \cdot \left( n+x\right) \right)
-\sin ^{2}\left( \pi t\sigma \left( n\right) \cdot n\right) \right)
dx \\
&-2\pi ^{-2}r^{d-1}\sum_{n\in \mathbb{Z}^{d}\setminus\left\{ 0\right\} }K\left(
\sigma \left( n\right) \right) ^{-1}\sin ^{2}\left( \pi t\sigma \left(
n\right) \cdot n\right)\\
& \quad \times \int_{\mathbb{T}^{d}}\left( \left\vert
n+x\right\vert ^{-d-1}-\left\vert n\right\vert ^{-d-1}\right) dx.
\end{align*}

First at all, one has 
\begin{align*}
&2\pi ^{-2}r^{d-1}\int_{\mathbb{T}^{d}}K\left( \sigma \left( x\right)
\right) ^{-1}\sin ^{2}\left( \pi t\sigma \left( x\right) \cdot x\right)
\left\vert x\right\vert ^{-d-1}dx \\
&\leq 2r^{d-1}t^{2}\int_{\mathbb{T}^{d}}K\left( \sigma \left( x\right)
\right) ^{-1}\left( \sigma \left( x\right) \cdot x\right) ^{2}\left\vert
x\right\vert ^{-d-1}dx\leq Cr^{d-1}t^{2}.
\end{align*}

Then observe that $\sigma \left( x\right) $ is smooth in $\mathbb{R}%
^{d}\setminus\left\{ 0\right\} $ and homogeneous of degree zero. Moreover, as mentioned before, 
$c\left\vert x\right\vert \leq \sigma \left( x\right) \cdot x\leq C\left\vert
x\right\vert $ for some $C\geq c>0$ and every $x\in \mathbb{R}^{d}$. Hence
also $K\left( \sigma \left( x\right) \right) ^{-1}$ is smooth in $\mathbb{R}%
^{d}\setminus\left\{ 0\right\} $ and homogeneous of degree zero, and for every $x\in 
\mathbb{T}^{d}$ and $n\in \mathbb{Z}^{d}\setminus\left\{ 0\right\} $ one has 
\[
\left\vert K\left( \sigma \left( n+x\right) \right) ^{-1}-K\left( \sigma
\left( n\right) \right) ^{-1}\right\vert \leq C\left\vert n\right\vert ^{-1}.
\]

Hence, 
\begin{align*}
&2\pi ^{-2}r^{d-1}\sum_{n\in \mathbb{Z}^{d}\setminus\left\{ 0\right\} }\int_{%
\mathbb{T}^{d}}\left\vert K\left( \sigma \left( n+x\right) \right)
^{-1}-K\left( \sigma \left( n\right) \right) ^{-1}\right\vert\\
& \ \times \sin^{2}\left( \pi t\sigma \left( n+x\right) \cdot \left( n+x\right) \right)
\left\vert n+x\right\vert ^{-d-1}dx \\
&\leq Cr^{d-1}t^{2}\sum_{0<\left\vert n\right\vert \leq 1/t}\left\vert
n\right\vert ^{-d}+Cr^{d-1}\sum_{1/t<\left\vert n\right\vert <+\infty
}\left\vert n\right\vert ^{-d-2}\\
&\leq Cr^{d-1}t^{2}\log \left( 2+1/t\right) .
\end{align*}

Similarly, by the trigonometric identity $\sin ^{2}\left( x\right) -\sin
^{2}\left( y\right) =\sin \left( x+y\right) \sin \left( x-y\right) $, and since $|\sigma(x)\cdot x-\sigma(y)\cdot y|\leq C|x-y|$,
\begin{align*}
&2\pi ^{-2}r^{d-1}\sum_{n\in \mathbb{Z}^{d}\setminus\left\{ 0\right\} }K\left(
\sigma \left( n\right) \right) ^{-1}\int_{\mathbb{T}^{d}}\left\vert n+x\right\vert ^{-d-1}\\
& \quad \times \left\vert \sin
^{2}\left( \pi t\sigma \left( n+x\right) \cdot \left( n+x\right) \right)
-\sin ^{2}\left( \pi t\sigma \left( n\right) \cdot n\right) \right\vert
dx \\
&\leq 2\pi ^{-2}r^{d-1}\sum_{n\in \mathbb{Z}^{d}\setminus\left\{ 0\right\} }K\left(
\sigma \left( n\right) \right) ^{-1}\int_{\mathbb{T}^{d}}\left\vert \sin
\left( \pi t\left( \sigma \left( n+x\right) \cdot \left( n+x\right) +\sigma
\left( n\right) \cdot n\right) \right) \right\vert \\
& \quad \times \left\vert \sin \left( \pi t\left( \sigma \left( n+x\right) \cdot
\left( n+x\right) -\sigma \left( n\right) \cdot n\right) \right) \right\vert
\left\vert n+x\right\vert ^{-d-1}dx \\
&\leq Cr^{d-1}t^{2}\sum_{0<\left\vert n\right\vert \leq 1/t}\left\vert
n\right\vert ^{-d}+Cr^{d-1}t\sum_{1/t<\left\vert n\right\vert <+\infty
}\left\vert n\right\vert ^{-d-1}\\
&\leq Cr^{d-1}t^{2}\log \left( 2+1/t\right) .
\end{align*}

And the last term is 
\begin{align*}
&2\pi ^{-2}r^{d-1}\sum_{n\in \mathbb{Z}^{d}\setminus\left\{ 0\right\} }K\left(
\sigma \left( n\right) \right) ^{-1}\sin ^{2}\left( \pi t\sigma \left(
n\right) \cdot n\right) \int_{\mathbb{T}^{d}}\left\vert \left\vert
n+x\right\vert ^{-d-1}-\left\vert n\right\vert ^{-d-1}\right\vert dx \\
&\leq Cr^{d-1}t^{2}\sum_{0<n\leq 1/t}\left\vert n\right\vert
^{-d}+Cr^{d-1}\sum_{1/t<n<+\infty }\left\vert n\right\vert ^{-d-2}dx\\
&\leq
Cr^{d-1}t^{2}\log \left( 2+1/t\right) .
\end{align*}

Finally, an integration in polar coordinates $x=\rho \vartheta $ with a
change of variables gives 
\begin{align*}
&2\pi ^{-2}r^{d-1}\int_{\mathbb{R}^{d}}K\left( \sigma \left( x\right)
\right) ^{-1}\sin ^{2}\left( \pi t\sigma \left( x\right) \cdot x\right)
\left\vert x\right\vert ^{-d-1}dx \\
&=2\pi ^{-2}r^{d-1}\int_{0}^{+\infty }\int_{\left\{ \left\vert \vartheta
\right\vert =1\right\} }K\left( \sigma \left( \vartheta \right) \right)
^{-1}\sin ^{2}\left( \pi t\rho \sigma \left( \vartheta \right) \cdot
\vartheta \right) \rho ^{-2}d\rho d\vartheta \\
&=2\pi ^{-1}r^{d-1}t\left( \int_{0}^{+\infty }\sin ^{2}\left( s\right)
s^{-2}ds\right) \left( \int_{\left\{ \left\vert \vartheta \right\vert
=1\right\} }K\left( \sigma \left( \vartheta \right) \right) ^{-1}\sigma
\left( \vartheta \right) \cdot \vartheta \ d\vartheta \right) .
\end{align*}

The first integral can be evaluated using residues, 
\[
\int_{0}^{+\infty }\dfrac{\sin ^{2}\left( s\right) }{s^{2}}%
ds=\int_{-\infty }^{+\infty }\dfrac{1-\cos \left( 2s\right) }{4s^{2}}ds=%
\mathrm{Re}\left( \int_{-\infty }^{+\infty }\dfrac{1-\exp \left( 2iz\right) }{%
4z^{2}}dz\right) =\dfrac{\pi }{2}.
\]

The integral with the curvature is $d$ times the volume of the convex body $\Omega $, 
\[
\int_{\left\{ \left\vert \vartheta \right\vert =1\right\} }K\left( \sigma
\left( \vartheta \right) \right) ^{-1}\sigma \left( \vartheta \right) \cdot
\vartheta \ d\vartheta =d\ \left\vert \Omega \right\vert .
\]

This comes from the definition of the curvature as the Jacobian determinant
of the Gauss map. $K\left( \sigma \left( \vartheta \right) \right)
^{-1}d\vartheta =dA$ is an infinitesimal element of surface area of $%
\partial \Omega $, and $\sigma \left( \vartheta \right) \cdot \vartheta \ $%
is the height of the cone with vertex\ $0$ and base $dA$. Hence, 
\begin{equation*}
2\pi ^{-2}r^{d-1}\int_{\mathbb{R}^{d}}K\left( \sigma \left( x\right)
\right) ^{-1}\sin ^{2}\left( \pi t\sigma \left( x\right) \cdot x\right)
\left\vert x\right\vert ^{-d-1}dx=d\left\vert \Omega \right\vert r^{d-1}t.
\end{equation*}
\end{proof}

Observe that the only restriction on the indexes in the above lemmas is $r\ge 1$ and $t\leq r
$, and the assumption $t\leq r^{-\alpha }$ with $\alpha >\left( d-1\right)
/\left( d+1\right) $ in the statement of the theorem has not been used. 
It remains to estimate $Y\left( r,t\right) $, and this is the most delicate
part of the proof. If one assumes that the series that defines $Y\left(
r,t\right) $ is asymptotic to an integral, then one can easily check that
this integral is negligible with respect to $X\left( r,t\right) $. We do not
know under which assumptions the series that defines $Y\left( r,t\right) $
is asymptotic to an integral, as it is the case for $X\left( r,t\right) $.
But, by Remark \ref{R1}, some assumptions are necessary. For this reason we need to
follow a more circuitous path. By the Cauchy--Schwarz inequality, $\left\vert
Y\left( r,t\right) \right\vert \leq X\left( r,t\right) \leq Cr^{d-1}t$. In
order to obtain some better estimates one has to take into account the
cancellations in the series that defines $Y\left( r,t\right) $. We need a
couple of preliminary lemmas.

\begin{lemma}\label{L6}
If $X$ and $Y$ are two convex
bodies with smooth boundaries with everywhere positive Gaussian curvature,
then also the Minkowski sum $X+Y$, that is the set obtained by
adding each vector in $X$ to each vector in $Y$, is a
convex body with smooth boundary with everywhere positive curvature.
\end{lemma}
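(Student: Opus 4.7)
The plan is to pass everything through the support function. For a convex body $K\subset\mathbb{R}^{d}$ set $h_{K}(\xi)=\sup_{y\in K} y\cdot\xi$; this is positively homogeneous of degree one and Minkowski addition is linearized, $h_{X+Y}=h_{X}+h_{Y}$. The hypothesis on $K$, namely smooth boundary with everywhere positive Gaussian curvature, is classically equivalent (see e.g.\ \cite{Sc}) to the following two statements together: $h_{K}$ is $C^{\infty}$ on $\mathbb{R}^{d}\setminus\{0\}$, and at every $\vartheta\in S^{d-1}$ the Hessian $D^{2}h_{K}(\vartheta)$, which by Euler's identity annihilates $\vartheta$, is strictly positive definite when restricted to $\vartheta^{\perp}$. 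Under this dictionary the map $\vartheta\mapsto\nabla h_{K}(\vartheta)=\sigma(\vartheta)$ appearing in Lemma~\ref{L2} is the inverse Gauss map, and the eigenvalues of $D^{2}h_{K}(\vartheta)|_{\vartheta^{\perp}}$ are precisely the principal radii of curvature $R_{1},\dots,R_{d-1}$ of $\partial K$ at $\sigma(\vartheta)$; in particular $K(\sigma(\vartheta))^{-1}=R_{1}\cdots R_{d-1}=\det\bigl(D^{2}h_{K}(\vartheta)|_{\vartheta^{\perp}}\bigr)$.

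With this machinery in place the lemma is essentially an observation about positive definite matrices. First, $h_{X+Y}=h_{X}+h_{Y}$ is $C^{\infty}$ on $\mathbb{R}^{d}\setminus\{0\}$ as a sum of two such functions. Second, since differentiation is linear, for every $\vartheta\in S^{d-1}$
\[
D^{2}h_{X+Y}(\vartheta)\big|_{\vartheta^{\perp}}=D^{2}h_{X}(\vartheta)\big|_{\vartheta^{\perp}}+D^{2}h_{Y}(\vartheta)\big|_{\vartheta^{\perp}},
\]
and the sum of two symmetric positive definite matrices is symmetric positive definite. Running the dictionary backwards, the principal radii of curvature of $\partial(X+Y)$ are finite and strictly positive everywhere, hence so are the principal curvatures; their product, the Gaussian curvature, is therefore strictly positive at every boundary point. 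The boundary is smooth because $\vartheta\mapsto\nabla h_{X+Y}(\vartheta)=\sigma_{X}(\vartheta)+\sigma_{Y}(\vartheta)$ is a $C^{\infty}$ parametrization of $\partial(X+Y)$ by $S^{d-1}$ whose differential is non-degenerate on $\vartheta^{\perp}$ (again because the restricted Hessian is positive definite), so by the inverse function theorem the Gauss map of $\partial(X+Y)$ is a $C^{\infty}$ diffeomorphism to $S^{d-1}$.

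The only genuine content is the support-function characterization of smooth boundary with positive Gaussian curvature; this is the main obstacle in the sense that without it the argument has nothing to leverage, but it is classical and can be quoted. Once it is in hand, convexity of $X+Y$ is already obvious from the definition, and all remaining claims reduce to the observation that positive definiteness of symmetric matrices is preserved under addition.
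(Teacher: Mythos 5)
Your proof is correct and complete, but it takes a genuinely different route from the paper's. The paper handles the two claims separately: smoothness of $\partial(X+Y)$ is delegated to Krantz--Parks \cite{KP}, and positivity of the Gaussian curvature is established by a synthetic comparison argument --- by a rolling-ball type theorem each of $X$, $Y$ lies inside a ball tangent at the relevant boundary point, $X+Y$ then lies inside the Minkowski sum of those balls (itself a ball), and the curvature of $\partial(X+Y)$ at the contact point is therefore bounded below by the curvature of that ball. Your argument dualizes the whole problem through the support function: $h_{X+Y}=h_X+h_Y$, the class of bodies with $C^\infty$ boundary and everywhere positive Gaussian curvature is exactly the class with $h_K\in C^\infty(\mathbb{R}^d\setminus\{0\})$ and $D^2h_K(\vartheta)\big|_{\vartheta^\perp}$ positive definite, and that class is closed under addition simply because symmetric positive definite matrices form a convex cone. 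What this buys you is a single, dimension-free argument that yields smoothness and curvature positivity simultaneously from one classical equivalence (available in the paper's own reference \cite{Sc}), rather than two separate arguments with two separate inputs; what the paper's version buys is a more visibly geometric picture and independence from the support-function characterization. Both are legitimate, and yours is arguably the more economical of the two.
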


\begin{proof} The fact that $X+Y$ has smooth boundary is proved in \cite{KP}. 
The fact that the boundary has positive Gaussian
curvature can be seen as follows. The strict convexity of $X$ and $Y$
implies that for every $z$ on the boundary $\partial \left( X+Y\right) $
there exist only one $x\in \partial X$ and one $y\in \partial Y$ with $z=x+y$%
. The curvature assumption implies that there exist balls $B_{x}$ and $B_{y}$
with $x\in \partial B_{x}$, $X\subseteq B_{x}$, $y\in \partial B_{y}$, $%
Y\subseteq B_{y}$. It follows that $x+y\in \partial \left(
B_{x}+B_{y}\right) $ and $X+Y\subseteq B_{x}+B_{y}$. Hence the curvature of $%
\partial \left( X+Y\right) $ at the point $x+y$ is at least as large as the
curvature of $B_{x}+B_{y}$, which is a ball with radius the sum of the radii
of $B_{x}$ and $B_{y}$. By the way, without the
curvature assumption the smoothness of the Minkowsky sum may fail. Indeed it has been proved in \cite{Ki}
that there exist convex sets in the plane with real analytic
boundaries, but with the smoothness of the sum not exceeding $C^{20/3}$. And
if the boundaries are only $C^{\infty }$ then the smoothness of the sum may
break out at the level $C^{5}$.
\end{proof}

\begin{lemma}\label{L7}
Denote by $\sigma \left( \pm x\right) $
the points of the boundary $\partial \Omega $ with outward unit
normals $\pm x/\left\vert x\right\vert $, and define 
\begin{equation*}
\zeta \left( x\right) =\left( \sigma \left( x\right) -\sigma \left(
-x\right) \right) \cdot x.
\end{equation*}
Also denote by $A=\Omega +\left( -\Omega \right) $ the
Minkowski sum of $\Omega $ and $-\Omega $. Finally,
assume that $\psi \left( x\right) $ is a smooth function in 
$\mathbb{R}^{d}$ with support in $\varepsilon \leq \left\vert
x\right\vert \leq 1/\varepsilon $, and such that for some $\eta $
 and for every multi index $k$,
\begin{equation*}
\left\vert \dfrac{\partial ^{k}}{\partial x^{k}}\psi \left( s\right)
\right\vert \leq C\left( k\right) \varepsilon ^{-\eta -\left\vert
k\right\vert }.
\end{equation*}
Then for every $j>0$ there exist positive constants $C$
and $\gamma $, such that for every $\xi $ in 
$\mathbb{R}^{d}$, every $\lambda >0$, and every 
$0<\varepsilon <1$, one has 
\begin{align*}
&\left\vert \int_{\mathbb{R}^{d}}\psi \left( x\right) \exp \left( 2\pi
i\lambda \left( \zeta \left( x\right) -\xi \cdot x\right) \right)
dx\right\vert \\
&\leq C\varepsilon ^{-\gamma }\min \left\{ \lambda ^{-\left( d-1\right) /2},\
\left( \lambda \, \mathrm{distance}\left\{ \xi ,\ \partial A\right\} \right)
^{-j}\right\} .
\end{align*}
\end{lemma}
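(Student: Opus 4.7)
My plan is to combine a non-stationary integration-by-parts estimate with a uniform stationary-phase estimate on the sphere. Observe first that $\zeta(x)$ is the support function of the Minkowski sum $A=\Omega+(-\Omega)$, which by Lemma \ref{L6} is a smooth convex body with everywhere positive Gaussian curvature. Hence $\nabla\zeta(x)=\tau(x)\in\partial A$, where $\tau$ is the inverse Gauss map, and the phase $F(x)=\zeta(x)-\xi\cdot x$ satisfies
\[
|\nabla F(x)|=|\tau(x)-\xi|\geq\mathrm{dist}(\xi,\partial A)\quad\text{for every}\quad x\neq 0.
\]
Moreover, by the degree-one homogeneity of $\zeta$, all derivatives of $F$ on the support of $\psi$ are polynomially bounded in $\varepsilon^{-1}$.

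The non-stationary estimate $|I|\leq C\varepsilon^{-\gamma}(\lambda\,\mathrm{dist}(\xi,\partial A))^{-j}$ follows from $j$ iterations of the standard integration by parts based on the identity $e^{2\pi i\lambda F}=(2\pi i\lambda|\nabla F|^2)^{-1}\nabla F\cdot\nabla e^{2\pi i\lambda F}$. Each iteration contributes a factor bounded by $\mathrm{dist}(\xi,\partial A)^{-1}$, while derivatives of $\psi$ and of $\nabla F/|\nabla F|^2$ produce only powers of $\varepsilon^{-1}$. The bound is trivially valid, via $|I|\leq\int|\psi|$, when $\lambda\,\mathrm{dist}(\xi,\partial A)\leq 1$.

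The stationary-phase estimate $|I|\leq C\varepsilon^{-\gamma}\lambda^{-(d-1)/2}$ is obtained in polar coordinates $x=\rho\omega$: by the homogeneity of $\zeta$ the phase becomes $\lambda\rho G(\omega)$ with $G(\omega)=\zeta(\omega)-\xi\cdot\omega$, so
\[
I=\int_0^{\infty}\rho^{d-1}\left[\int_{S^{d-1}}\psi(\rho\omega)\,e^{2\pi i\lambda\rho G(\omega)}\,d\omega\right]d\rho.
\]
For each fixed $\rho\in[\varepsilon,1/\varepsilon]$, the inner $(d-1)$-dimensional oscillatory integral is estimated by the standard stationary phase on the sphere to be $\leq C\varepsilon^{-\gamma}(\lambda\rho)^{-(d-1)/2}$; the required non-degeneracy of the critical points of $G$ on $S^{d-1}$ is supplied by the positive curvature of $\partial A$, since at any critical point $\omega_0$ of $G$ the spherical Hessian of $\zeta$ is bounded below by the minimum reciprocal principal curvature of $A$ and dominates the Hessian contribution of the linear term $\xi\cdot\omega$, uniformly in $\xi$ on bounded sets. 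Integrating $\rho^{d-1}(\lambda\rho)^{-(d-1)/2}$ over $[\varepsilon,1/\varepsilon]$ yields the claimed bound; the case $|\xi|$ very large is already covered by the non-stationary estimate, since then $\mathrm{dist}(\xi,\partial A)\sim|\xi|$.

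Combining the two estimates (enlarging $\gamma$ if needed) gives the claimed minimum. The substantive obstacle is verifying the uniform non-degeneracy of the critical points of $G$ as $\xi$ ranges over a bounded neighborhood of $A$: the distinguished critical direction lies near the outward normal to $\partial A$ at the point of $\partial A$ nearest to $\xi$, with Hessian bounded below by the principal curvatures of $A$ there, while any additional critical directions correspond to further normals of $\partial A$ passing through $\xi$ and are similarly controlled by the positive curvature established in Lemma \ref{L6}.
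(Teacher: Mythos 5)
Your non-stationary part is fine and matches the paper: both use that the support function $\zeta$ of $A=\Omega+(-\Omega)$ has $\nabla\zeta(x)=\tau(x)\in\partial A$, so $|\nabla(\zeta(x)-\xi\cdot x)|\geq\mathrm{dist}(\xi,\partial A)$, and then iterate the standard integration by parts.

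The stationary-phase half has a genuine gap. You pass to polar coordinates, estimate the inner integral over $S^{d-1}$ by stationary phase for each fixed $\rho$, and then integrate in $\rho$ by taking absolute values. The problem is your claim that the spherical Hessian of $\zeta$ is bounded below by (essentially) the curvature of $\partial A$, uniformly dominating the contribution of the linear term. This is false, and it fails already for the ball. If $\Omega$ is a ball then $A$ is a ball and $\zeta(\omega)=\mathrm{const}$ on $S^{d-1}$, so the spherical Hessian of $\zeta$ is identically zero. More precisely, at a critical point $\omega_0$ of $G(\omega)=\zeta(\omega)-\xi\cdot\omega$ one has the spherical Hessian of $G$ along a tangent direction $v$ equal to
\[
v^{T}\nabla^{2}\zeta(\omega_0)v-\bigl(\zeta(\omega_0)-\xi\cdot\omega_0\bigr),
\]
and for the ball the first term equals $\zeta(\omega_0)$, so the whole thing collapses to $\xi\cdot\omega_0$. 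Thus the inner spherical integral decays only like $(\lambda\rho|\xi|)^{-(d-1)/2}$, not $(\lambda\rho)^{-(d-1)/2}$, and for small $|\xi|$ this gives nothing after integrating in $\rho$. The case $|\xi|$ small is \emph{not} covered by the non-stationary bound either: the lemma asserts both the $\lambda^{-(d-1)/2}$ bound and the $(\lambda\,\mathrm{dist})^{-j}$ bound unconditionally, so the stationary-phase estimate must hold for all $\xi$, including $\xi=0$ or $\xi$ deep inside $A$, and your scheme of "estimate the angular integral, then take absolute values in $\rho$" discards exactly the radial oscillation that must rescue those cases.

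The structural reason is that the phase $\zeta(x)-\xi\cdot x$ has Hessian of rank $d-1$ in $\mathbb{R}^d$, with the degenerate direction being the radial one (by homogeneity of $\zeta$); but spheres $|x|=\rho$ are not adapted to this degeneracy, and the restriction of the phase to a sphere can have vanishing Hessian. The paper instead applies the coarea formula to the \emph{level sets of $\zeta$}, i.e. dilates of $\partial A^{\circ}$, which are the foliation adapted to the flat direction; on each such leaf one has a genuine positively curved surface and the surface Fourier transform estimate applies, and the $c$-integration supplies the remaining cancellation. If you want to salvage a polar-coordinate argument you would have to keep track of the stationary-phase \emph{asymptotics} (not just the modulus) of the inner integral, in particular the oscillating factor $e^{2\pi i\lambda\rho(\zeta(\omega_0)-\xi\cdot\omega_0)}$, and then integrate by parts in $\rho$; but as written, the proof does not do this, and the claimed uniform Hessian bound is simply not true.
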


\begin{proof} Recall that $\sigma \left( x\right) \cdot x=\sup_{y\in
\Omega }\left\{ y\cdot x\right\} $, the support function of the convex body,
has gradient $\nabla \left( \sigma \left( x\right) \cdot x\right) =\sigma
\left( x\right) $. See \cite{BF}, or \cite[Corollary 1.7.3]{Sc}. Also observe that when $x$ varies in $\mathbb{R}^{d}\setminus\left\{
0\right\} $, then $\sigma \left( x\right) -\sigma \left( -x\right) $
describes the boundary of $A=\Omega +\left( -\Omega \right) $. Hence, 
\begin{equation*}
\left\vert \nabla \left( \left( \sigma \left( x\right) -\sigma \left(
-x\right) \right) \cdot x-\xi \cdot x\right) \right\vert =\left\vert \left(
\sigma \left( x\right) -\sigma \left( -x\right) \right) -\xi \right\vert
\geq \mathrm{distance}\left\{ \xi ,\ \partial A\right\} .
\end{equation*}

Then a repeated integration by parts gives 
\begin{equation*}
\left\vert \int_{\mathbb{R}^{d}}\psi \left( x\right) \exp \left( 2\pi
i\lambda \left( \zeta \left( x\right) -\xi \cdot x\right) \right)
dx\right\vert \leq C\varepsilon ^{-\gamma }\left( \lambda \, \mathrm{distance}%
\left\{ \xi ,\ \partial A\right\} \right) ^{-j}.
\end{equation*}

See e.g. \cite[Chapter VIII, \S 2.1]{St}. This proves half of the lemma. In
order to complete the proof, observe that the function $\left( \sigma \left(
x\right) -\sigma \left( -x\right) \right) \cdot x$ is the support function
of $A=\Omega +\left( -\Omega \right) $, and recall that, by the previous
lemma, the boundary of this body is smooth with everywhere positive Gaussian
curvature. It follows that this support function is homogeneous of degree
one, and that one eigenvalue of the Hessian matrix is zero, but all other eigenvalues
are positive. See \cite[Corollary 2.5.2]{Sc}. Hence, the Hessian of the
phase $\zeta \left( x\right) -\xi \cdot x$, which is the Hessian of $\left(
\sigma \left( x\right) -\sigma \left( -x\right) \right) \cdot x$, has rank $%
d-1$, and it follows that 
\begin{equation*}
\left\vert \int_{\mathbb{R}^{d}}\psi \left( x\right) \exp \left( 2\pi
i\lambda \left( \zeta \left( x\right) -\xi \cdot x\right) \right)
dx\right\vert \leq C\varepsilon ^{-\gamma }\lambda ^{-\left( d-1\right) /2}.
\end{equation*}

In order to see this, it suffices to apply the coarea formula to the level
set of the function $\zeta \left( x\right) $. Then one ends up to estimate
the Fourier transform of a smooth measure carried by a smooth surface with
everywhere positive Gaussian curvature. See e.g. \cite{Li}, or \cite[Chapter VIII,\S 2.3 and \S 3.1]{St}.
\end{proof}

\begin{lemma}\label{L8}
With the definition of $Y\left( r,t\right) $
in Lemma \ref{L4}, if $\alpha >\left( d-1\right) /\left( d+1\right) $
 there exist positive constants $C$ and $\beta $ such that for every 
$1\leq r<+\infty $ and every $0<t\leq
r^{-\alpha }$ one has, 
\begin{equation*}
\left\vert Y\left( r,t\right) \right\vert \leq C\left\vert \Omega \left(
r,t\right) \right\vert t^{\beta }.
\end{equation*}
\end{lemma}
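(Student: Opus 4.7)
Since the trivial estimate $|Y(r,t)| \leq X(r,t) \leq C|\Omega(r,t)|$ (Cauchy--Schwarz combined with Lemma \ref{L5}) is already tight up to the constant, the additional factor $t^\beta$ claimed must come entirely from the cancellations in the cosine $\cos(2\pi r\zeta(n) - \pi(d-1)/2)$ appearing in the summand of $Y(r,t)$. The plan is to absorb the product of sines into the phase as well, reducing to pure exponential sums with phase $h_K(n)$ for suitable smooth strictly convex bodies $K$, then apply the Poisson summation formula shell by shell in $|n|$, and finally estimate each Fourier coefficient by the oscillatory-integral bound of Lemma \ref{L7} (whose proof transfers verbatim to each of the new bodies produced by the expansion).

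First I would use $\sin A\sin B = \tfrac12[\cos(A-B) - \cos(A+B)]$ together with the product-to-sum formula for cosines to rewrite $Y(r,t)$ as a sum of four sub-series whose summands have the form $F(n)\cos(2\pi \mu_k h_{K_k}(n) + c_k)$, where $F(x) = K(\sigma(x))^{-1/2}K(\sigma(-x))^{-1/2}|x|^{-d-1}$, each $\mu_k \asymp r$, and each $K_k$ is one of $(r\pm t/2)A$ or $(r\pm t/2)\Omega + (r\mp t/2)(-\Omega)$ with $A = \Omega + (-\Omega)$. By Lemma \ref{L6} each $K_k$ has smooth boundary with everywhere positive Gaussian curvature, so Lemma \ref{L7} applies with $\partial K_k$ in place of $\partial A$. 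Next I would introduce a smooth dyadic partition of unity $\sum_{j \geq 0}\phi_j(|x|) = 1$ on $\{|x| \geq 1/2\}$ and, for each shell $|n| \sim R = 2^j$, apply Poisson summation (Lemma \ref{L1}) to replace the sum over $n$ by $\sum_{\xi \in \mathbb{Z}^d}\widehat{H_R^{(k)}}(\xi)$. Rescaling $x = Ry$ and applying Lemma \ref{L7} with wavenumber $\lambda = \mu_k R \asymp rR$ (extracting an extra amplitude factor $(tR)^2$ from the sine product via $\sin u = u\,\mathrm{sinc}(u)$ when $tR\leq 1$), one obtains
\[
|\widehat{H_R^{(k)}}(\xi)| \leq CR^{-1}\min(1,tR)^2 \min\bigl\{(rR)^{-(d-1)/2},\; (R\,\mathrm{distance}(\xi,\mu_k\partial K_k))^{-N}\bigr\}
\]
for any $N$. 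To sum over $\xi \in \mathbb{Z}^d$ I would use the Hlawka--Herz lattice-discrepancy estimate
\[
\#\{\xi \in \mathbb{Z}^d : \mathrm{distance}(\xi, r\partial K) \leq s\} \leq C\bigl(r^{d-1}s + r^{d(d-1)/(d+1)}\bigr)
\]
for smooth strictly convex surfaces of linear size $r$, which produces the critical exponent $d(d-1)/(d+1)$. Summing the resulting shell bounds dyadically, splitting the sum at the critical scales $R \sim r^{(d-1)/(d+1)}$ and $R \sim 1/t$ and using the trivial bound $|Y_R| \leq Cr^{d-1}R^{-1}\min(1,tR)^2$ wherever it improves on the Poisson estimate, the hypothesis $\alpha > (d-1)/(d+1)$ (which guarantees $1/t \geq r^{(d-1)/(d+1)}$) yields $|Y(r,t)| \leq Cr^{d-1}t^{1+\beta}$ for some $\beta > 0$ depending on $\alpha - (d-1)/(d+1)$ and $d$. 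Since $|\Omega(r,t)| \asymp r^{d-1}t$ this is the claim.

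The main obstacle is carrying out the sum over $\xi$ with the sharp Hlawka--Herz bound and tracking the three-way balance between the stationary-phase decay $(rR)^{-(d-1)/2}$ of Lemma \ref{L7}, the amplitude gain $\min(1,tR)^2$ coming from the sines, and the lattice-discrepancy count $r^{d(d-1)/(d+1)}$; the threshold $\alpha > (d-1)/(d+1)$ is precisely what this balance requires, and it is no accident that the same exponent appears in the classical bounds for the error term in the Gauss-type lattice point problem for $r\partial K_k$. A secondary difficulty is the regime $tR > 1$, where the sine amplitude no longer shrinks; the phase-splitting of the first step is essential there because otherwise the derivatives of $\sin(\pi t\sigma(\pm n)\cdot n)$ would grow like $(tR)^{|k|}$ and violate the amplitude hypotheses of Lemma \ref{L7}.
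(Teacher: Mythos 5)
Your proposal is essentially correct and rests on the same three pillars as the paper's proof: Poisson summation to pass to the dual lattice, the oscillatory-integral bound of Lemma~\ref{L7} (stationary phase near $\partial A$ plus non-stationary decay measured by the distance to the surface), and the Hlawka--Herz lattice-point discrepancy bound $\#\{n : \mathrm{distance}(n,r\partial A)\leq s\}\leq C(r^{d-1}s + r^{d(d-1)/(d+1)})$ with the same critical exponent and the same balance condition $\alpha > (d-1)/(d+1)$. However, the organization of the argument is genuinely different in two respects, and the paper's version is substantially slicker.

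First, instead of a dyadic decomposition in $|n|$ with per-shell Poisson summation, the paper performs a single global rescaling: it writes $Y(r,t)=-2\pi^{-2}r^{d-1}t\sum_n t^d\varphi(tn)\cos(2\pi rt^{-1}\zeta(tn)-\vartheta)$, where $\varphi$ is $t$-independent, inserts one smooth cutoff $\chi(|x|)$ supported in $\varepsilon\leq|x|\leq 1/\varepsilon$ (the tails contribute $O(\varepsilon)$), and applies Poisson summation once, yielding $\sum_n\widehat f(t^{-1}n)$ with a single effective wavenumber $\lambda=rt^{-1}$. The free parameter is $\varepsilon$, and the proof ends by optimizing $\varepsilon$ against the $\varepsilon^{-\gamma}$ loss from the cutoff derivatives. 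Your dyadic-shell version works but replaces one parameter optimization by a three-way dyadic summation that has to be split at both $R\sim r^{(d-1)/(d+1)}$ and $R\sim 1/t$.

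Second, and more significantly, the paper does \emph{not} absorb the factor $\sin(\pi t\sigma(n)\cdot n)\sin(\pi t\sigma(-n)\cdot n)$ into the phase. After the rescaling $x=tn$ this factor becomes $\sin(\pi\sigma(x)\cdot x)\sin(\pi\sigma(-x)\cdot x)$, which is a fixed, $t$-independent, smooth amplitude on the annulus $\varepsilon\leq|x|\leq 1/\varepsilon$; all the $t$-dependence migrates into the wavenumber $\lambda=rt^{-1}$ multiplying the single phase $\zeta(x)$, which is the support function of $A=\Omega+(-\Omega)$. This entirely avoids the $tR\lessgtr 1$ dichotomy that forces your phase-splitting, and it also avoids having to control uniformly (in $t$) the geometry of the family of auxiliary bodies $(r\pm t/2)\Omega+(r\mp t/2)(-\Omega)$ that your product-to-sum expansion produces. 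Your version can be made to work (Lemma~\ref{L6} does guarantee those bodies are smooth and strictly convex, and for $t\leq r$ they are uniformly close to $rA$), but it requires verifying that the constants in the Lemma~\ref{L7}-type estimate are uniform over this $t$-dependent family, a point you would need to address explicitly. In short: same engine, different transmission; the paper's rescaling-by-$t$ trick collapses several of the cases you have to handle separately.
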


\begin{proof} In order to simplify the notation, set 
\begin{align*}
\vartheta&=\pi \left( d-1\right) /2 ,\\
\zeta\left( x\right)&=\left( \sigma \left( x\right) -\sigma \left( -x\right) \right) \cdot x,\\
\varphi \left( x\right)&=K\left( \sigma \left( x\right) \right) ^{-1/2}K\left( \sigma \left(
-x\right) \right) ^{-1/2}\sin \left( \pi \sigma \left( x\right) \cdot
x\right) \sin \left( \pi \sigma \left( -x\right) \cdot x\right) \left\vert
x\right\vert ^{-d-1} .
\end{align*}

Then one can rewrite the series that defines $Y\left( r,t\right) $ as 
\begin{equation*}
Y\left( r,t\right) =-2\pi ^{-2}r^{d-1}t\sum_{n\in \mathbb{Z}^{d}\setminus\left\{
0\right\} }t^{d}\varphi \left( tn\right) \cos \left( 2\pi rt^{-1}\zeta
\left( tn\right) -\vartheta \right) .
\end{equation*}

Observe that the factor $r^{d-1}t$ in front of the series is of the order of 
$\left\vert \Omega \left( r,t\right) \right\vert $. Hence, in order to prove
the lemma it suffices to show that the series is bounded by $Ct^{\beta }$
when $t\leq r^{-\alpha }$. Let $0<\varepsilon <1/2$ and let $\chi \left(
s\right) $ be a smooth function with support in $\varepsilon \leq s\leq
1/\varepsilon $, with $0\leq \chi \left( s\right) \leq 1$ and equal to 1 in $%
2\varepsilon \leq s\leq 1/2\varepsilon $, and with 
\begin{equation*}
\left\vert \dfrac{d^{j}}{ds^{j}}\chi \left( s\right) \right\vert \leq
C\varepsilon ^{-j}.
\end{equation*}

With this cut off function, one can decompose 
\begin{align*}
&\sum_{n\in \mathbb{Z}^{d}\setminus\left\{ 0\right\} }t^{d}\varphi \left( tn\right)
\cos \left( 2\pi rt^{-1}\zeta \left( tn\right) -\vartheta \right) \\
&= \sum_{n\in \mathbb{Z}^{d}\setminus\left\{ 0\right\} }t^{d}\left( 1-\chi \left(
\left\vert tn\right\vert \right) \right) \varphi \left( tn\right) \cos
\left( 2\pi rt^{-1}\zeta \left( tn\right) -\vartheta \right) \\
& \ +\sum_{n\in \mathbb{Z}^{d}\setminus\left\{ 0\right\} }t^{d}\chi \left( \left\vert
tn\right\vert \right) \varphi \left( tn\right) \cos \left( 2\pi rt^{-1}\zeta
\left( tn\right) -\vartheta \right) .
\end{align*}

One has 
\begin{align*}
&\left\vert \sum_{n\in \mathbb{Z}^{d}\setminus\left\{ 0\right\} }t^{d}\left( 1-\chi
\left( \left\vert tn\right\vert \right) \right) \varphi \left( tn\right)
\cos \left( 2\pi rt^{-1}\zeta \left( tn\right) -\vartheta \right) \right\vert\\
&\leq \pi ^{2}\sup \left\{ \left\vert \sigma \left( n\right) \right\vert
^{2}K\left( \sigma \left( n\right) \right) ^{-1}\right\}
t\sum_{0<\left\vert n\right\vert <2\varepsilon /t}\left\vert n\right\vert
^{1-d} \\
& \ +\sup \left\{ K\left( \sigma \left( n\right) \right) ^{-1}\right\}
t^{-1}\sum_{1/\left( 2\varepsilon t\right) <\left\vert n\right\vert
<+\infty }\left\vert n\right\vert ^{-d-1} \\
&\leq C\varepsilon .
\end{align*}

Again, in order to simplify a bit the notation, set 
\begin{equation*}
f(x)=\chi \left( \left\vert x\right\vert \right) \varphi \left( x\right) \cos
\left( 2\pi rt^{-1}\zeta \left( x\right) -\vartheta \right)  .
\end{equation*}

Then, if $\widehat{f}\left( \xi \right) =\int_{\mathbb{R}^{d}}f\left(
x\right) \exp \left( -2\pi i\xi \cdot x\right) dx$ is the Fourier transform
of $f\left( x\right) $, the Poisson summation formula with a change of
variables gives 
\begin{equation*}
\sum_{n\in \mathbb{Z}^{d}\setminus\left\{ 0\right\} }t^{d}\chi \left( \left\vert
tn\right\vert \right) \varphi \left( tn\right) \cos \left( 2\pi rt^{-1}\zeta
\left( tn\right) -\vartheta \right) =\sum_{n\in \mathbb{Z}^{d}}t^{d}f\left(
tn\right) =\sum_{n\in \mathbb{Z}^{d}}\widehat{f}\left( t^{-1}n\right) .
\end{equation*}

Observe that the function $f\left( x\right) $ is smooth with compact
support, and that $\widehat{f}\left( \xi \right) $ has fast decay at
infinity. In particular, in the above series there are no
problems of convergence. Writing a cosine as a sum of exponentials, one has 
\begin{align*}
\widehat{f}\left( t^{-1}n\right)& =\int_{\mathbb{R}^{d}}\chi \left(
\left\vert x\right\vert \right) \varphi \left( x\right) \cos \left( 2\pi
rt^{-1}\zeta \left( x\right) -\vartheta \right) \exp \left( -2\pi
it^{-1}n\cdot x\right) dx \\
&=2^{-1}\exp \left( -i\vartheta \right) \int_{\mathbb{R}^{d}}\chi \left(
\left\vert x\right\vert \right) \varphi \left( x\right) \exp \left( 2\pi
irt^{-1}\left( \zeta \left( x\right) -r^{-1}n\cdot x\right) \right) dx \\
& \ +2^{-1}\exp \left( i\vartheta \right) \int_{\mathbb{R}^{d}}\chi \left(
\left\vert x\right\vert \right) \varphi \left( x\right) \exp \left( 2\pi
irt^{-1}\left( -\zeta \left( x\right) -r^{-1}n\cdot x\right) \right) dx.
\end{align*}

Then the previous lemma with $\lambda =rt^{-1}$ and $\xi =\pm r^{-1}n$ gives
for every $j$, 
\begin{equation*}
\left\vert \widehat{f}\left( t^{-1}n\right) \right\vert \leq C\varepsilon
^{-\gamma }\min \left\{ \left( rt^{-1}\right) ^{-\left( d-1\right) /2},\
\left( t^{-1}\mathrm{distance}\left\{  n,\ \partial \left( rA\right)
\right\} \right) ^{-j}\right\},
\end{equation*}
 where the term $\pm n$ in the right-hand side has been replaced by $n$ 
 because $A$ is symmetric.
 
At this point, without pretense of rigor one could conclude the proof as
follows. The above Fourier transform is concentrated in the annulus $\left\{ 
\mathrm{distance}\left\{ n,\ \partial \left( rA\right) \right\} \leq
t\right\} $ which has a measure dominated by $Cr^{d-1}t$, and in this
annulus $\left\vert \widehat{f}\left( t^{-1}n\right) \right\vert \leq
C\varepsilon ^{-\gamma }\left( rt^{-1}\right) ^{-\left( d-1\right) /2}$.
This should imply that 
\begin{equation*}
\sum_{n\in \mathbb{Z}^{d}}\left\vert \widehat{f}\left( t^{-1}n\right)
\right\vert \leq C\varepsilon ^{-\gamma }\left( rt^{-1}\right) ^{-\left(
d-1\right) /2}r^{d-1}t=C\varepsilon ^{-\gamma }r^{\left( d-1\right)
/2}t^{\left( d+1\right) /2}.
\end{equation*}

If $t\leq r^{-\alpha }$ with $\alpha >\left( d-1\right) /\left( d+1\right) $%
, then one can choose $\varepsilon \rightarrow 0+$ such that $\varepsilon
^{-\gamma }r^{\left( d-1\right) /2}t^{\left( d+1\right) /2}\rightarrow 0+$
as $r\rightarrow +\infty $, and this would conclude this pseudo proof. The
proof with full details is a bit more involved. For every $0<s<1$, 
\begin{align*}
\sum_{n\in \mathbb{Z}^{d}}\left\vert \widehat{f}\left( t^{-1}n\right)
\right\vert &
\leq C\varepsilon ^{-\gamma }r^{-\left( d-1\right) /2}t^{\left( d-1\right)
/2}\sum_{\mathrm{distance}\left\{ n,\ \partial \left( rA\right) \right\}
\leq s}1 \\
& +C\varepsilon ^{-\gamma }t^{j}s^{-j}\sum_{\mathrm{distance}\left\{ n,\
\partial \left( rA\right) \right\} \leq 1}1 \\
&+C\varepsilon ^{-\gamma }t^{j}\sum_{k=1}^{+\infty }2^{-jk}\left( \sum_{%
\mathrm{distance}\left\{ n,\ \partial \left( rA\right) \right\} \leq
2^{k}}1\right) .
\end{align*}

In order to estimate the sum over $\left\{ \mathrm{distance}\left\{ n,\
\partial \left( rA\right) \right\} \leq s\right\} $, observe that for some
positive constant $c$ and for every $s<1\leq r$ one has 
\begin{equation*}
\left\{ \mathrm{distance}\left\{ n,\ \partial \left( rA\right) \right\}
\leq s\right\} \subseteq \left( r+cs\right) A\setminus\left( r-cs\right) A.
\end{equation*}

By Lemma \ref{L6} the convex body $A=\Omega +\left( -\Omega \right) $ has a smooth
boundary with everywhere positive Gaussian curvature, and it has been proved
in  \cite{H2, Hl} that there exists a positive constant $C$ such that
for every $r\geq 1$, 
\begin{equation*}
\left\vert \sum_{n\in rA}1-r^{d}\left\vert A\right\vert \right\vert \leq
Cr^{d\left( d-1\right) /\left( d+1\right) }.
\end{equation*}

See also \cite[Theorem 7.7.16]{Ho}. This implies that 
\begin{align*}
&\sum_{\mathrm{distance}\left\{ n,\ \partial \left( rA\right) \right\} \leq
s}1 \\
&\leq \left\vert \sum_{n\in \left( r+cs\right) A}1-\left( r+cs\right)
^{d}\left\vert A\right\vert \right\vert +\left\vert \sum_{n\in \left(
r-cs\right) A}1-\left( r-cs\right) ^{d}\left\vert A\right\vert \right\vert \\
& \ +\left\vert \left( r+cs\right) ^{d}-\left( r-cs\right) ^{d}\right\vert
\left\vert A\right\vert \\
&\leq C\left( r^{d\left( d-1\right) /\left( d+1\right) }+r^{d-1}s\right) .
\end{align*}

The choice $s=r^{-\left( d-1\right) /\left( d+1\right) }$, so that $%
r^{d\left( d-1\right) /\left( d+1\right) }=r^{d-1}s$, then gives 
\begin{equation*}
\varepsilon ^{-\gamma }r^{-\left( d-1\right) /2}t^{\left( d-1\right)
/2}\sum_{\mathrm{distance}\left\{ n,\ \partial \left( rA\right) \right\}
\leq s}1\leq C\varepsilon ^{-\gamma }r^{\left( d-1\right) ^{2}/\left(
2d+2\right) }t^{\left( d-1\right) /2}.
\end{equation*}

In order to estimate the sum over $\left\{ \mathrm{distance}\left\{ n,\
\partial \left( rA\right) \right\} \leq 2^{k}\right\} $, observe that 
\begin{equation*}
\sum_{\mathrm{distance}\left\{ n,\ \partial \left( rA\right) \right\} \leq
2^{k}}1\leq 
\begin{cases}
Cr^{d-1}2^{k} & \text{if }2^{k}\leq r\text{,} \\ 
C2^{dk} & \text{if }2^{k}\geq r\text{.}
\end{cases}
\end{equation*}

It follows that, with the choice $s=r^{-\left( d-1\right) /\left( d+1\right)
}$, 
\begin{equation*}
\varepsilon ^{-\gamma }t^{j}s^{-j}\sum_{\mathrm{distance}\left\{ n,\
\partial \left( rA\right) \right\} \leq 1}1\leq C\varepsilon ^{-\gamma
}r^{d-1+j\left( d-1\right) /\left( d+1\right) }t^{j}.
\end{equation*}

And if $j$ is suitably large it also follows that 
\begin{equation*}
\varepsilon ^{-\gamma }t^{j}\sum_{k=1}^{+\infty }2^{-jk}\left( \sum_{%
\mathrm{distance}\left\{ n,\ \partial \left( rA\right) \right\} \leq
2^{k}}1\right) \leq C\varepsilon ^{-\gamma }r^{d-1}t^{j}.
\end{equation*}

Collecting all these estimates, and assuming that $t\leq r^{-\alpha }$ for
some $\alpha >\left( d-1\right) /\left( d+1\right) $ and that $j$ is
sufficiently large, one obtains that 
\begin{align*}
\sum_{n\in \mathbb{Z}^{d}}\left\vert \widehat{f}\left( t^{-1}n\right)
\right\vert & \leq C\varepsilon ^{-\gamma }\left( r^{\left( d-1\right)
^{2}/\left( 2d+2\right) }t^{\left( d-1\right) /2}+r^{d-1+j\left( d-1\right)
/\left( d+1\right) }t^{j}+r^{d-1}t^{j}\right) \\
&\leq C\varepsilon ^{-\gamma }\left( r^{\left( d-1\right) ^{2}/\left(
2d+2\right) }t^{\left( d-1\right) /2}+r^{d-1+j\left( d-1\right) /\left(
d+1\right) }t^{j}\right) \\
&\leq C\varepsilon ^{-\gamma }r^{\left( d-1\right) ^{2}/\left( 2d+2\right)
}t^{\left( d-1\right) /2}\left( 1+r^{d-1}\left( r^{\left( d-1\right) /\left(
d+1\right) }t\right) ^{j-\left( d-1\right) /2}\right) \\
&\leq C\varepsilon ^{-\gamma }\left( r^{\left( d-1\right) /\left( d+1\right)
}t\right) ^{\left( d-1\right) /2}.
\end{align*}

Assuming again that $t\leq r^{-\alpha }$ for some $\alpha >\left(
d-1\right) /\left( d+1\right) $, and with the choice $\varepsilon =\left(
r^{\left( d-1\right) /\left( d+1\right) }t\right) ^{\left( d-1\right)
/\left( 2\gamma +2\right) }$, one obtains 
\begin{align*}
\left\vert Y\left( r,t\right) \right\vert &\leq C\left\vert \Omega \left(
r,t\right) \right\vert \left( \varepsilon +\varepsilon ^{-\gamma }\left(
r^{\left( d-1\right) /\left( d+1\right) }t\right) ^{\left( d-1\right)
/2}\right) \\
&\leq C\left\vert \Omega \left( r,t\right) \right\vert \left( r^{\left(
d-1\right) /\left( d+1\right) }t\right) ^{\left( d-1\right) /\left( 2\gamma
+2\right) } \\
&\leq C\left\vert \Omega \left( r,t\right) \right\vert \left( t^{1-\left(
d-1\right) /\left( \left( d+1\right) \alpha \right) }\right) ^{\left(
d-1\right) /\left( 2\gamma +2\right) }.
\end{align*}

Finally, in order to prove the lemma it suffices to choose 
\begin{equation*}
\beta \leq \frac{\left( \alpha -\dfrac{d-1}{d+1}\right) \left( d-1\right) }{%
\alpha \left( 2\gamma +2\right) }.
\end{equation*}
\end{proof}

\begin{proof}[Proof of Theorem \ref{T1}] By the previous lemmas, choosing $ \beta<1$ in Lemma \ref{L8}, one has  
\begin{align*}
|W(r,t)|+|Z(r,t)|+|Y(r,t)|&\leq C|\Omega(r,t)|\left(t\log(2+1/t)+tr^{-1}\log(2+1/t)+t^\beta\right)\\
&\leq C|\Omega(r,t)|t^\beta
\end{align*}
\end{proof}

We conclude with some remarks.

\begin{remark}\label{R1}
As said in the introduction, for the validity of the
theorem the assumption that the widths of the annuli converge to zero does
not suffice, and one has to require a suitable speed. Indeed in \cite{PS}
a somehow stronger failure of an asymptotic estimate is
proved. In any dimension $d$ the variance of spherical annuli $\left\{
r-t/2 < \left\vert x\right\vert \leq r+t/2\right\} $ is always smaller
than $Cr^{d-1}t$, and for some sequences $r\rightarrow +\infty $ it is
larger than $cr^{d-1}t$. Moreover, there exist sequences $r\rightarrow
+\infty $ and $t\rightarrow +\infty $ with associated variance much smaller
than $cr^{d-1}t$ for every $c>0$. In dimension $d\equiv 3$ modulo 4 this
also holds for some sequences of widths that stay bounded or that tend to
zero slower than any negative power of the radii. This is related to the
location of the zeroes of the Fourier transform of an annulus. See also \cite{BCGT}
for related results on higher order
moments.
\end{remark}

\begin{remark}\label{R2} 
As said in the introduction, the variance of annuli with
boundary points of zero curvature may be much larger than the mean, and an
asymptotic estimate of the variance may fail. A simple example are the flat
annuli in the plane generated by squares with sides parallel to the axes, 
\begin{align*}
A&=\left\{ x=\left( x_{1},x_{2}\right) :\ n-t/2<\max \left\{ \left\vert
x_{1}\right\vert ,\left\vert x_{2}\right\vert \right\} \leq n+t/2\right\} ,\\
B&=\left\{ x=\left( x_{1},x_{2}\right) :\ n<\max \left\{ \left\vert
x_{1}\right\vert ,\left\vert x_{2}\right\vert \right\} \leq n+t\right\} .
\end{align*}

The diameters and thicknesses of these two annuli are approximately the
same, but the random variables that count the lattice points are quite
different when $n$ is a large integer and $t$ is a small positive number.
The random variable $N(A,x)$ that counts the number of integer points in $%
A-x $ takes the value $8n$ on a set with measure $t^{2}$, the value $4n$ on
a set with measure $2t-2t^{2}$, and $0$ otherwise, and the mean and variance
are 
\begin{align*}
\int_{\mathbb{T}^{2}}N(A,x)dx&=8nt,\\
\int_{\mathbb{T}^{2}}\left\vert N(A,x)-8nt\right\vert
^{2}dx&=32n^{2}t-32n^{2}t^{2}\sim 32n^{2}t.
\end{align*}

Similarly, the random variable $N(B,x)$ that counts the number of integer
points in $B-x$ takes the value $4n+1$ on a set with measure $4t^{2}$, the
value $2n$ on a set with measure $4t-8t^{2}$, and $0$ otherwise, and the
mean and variance are 
\begin{align*}
\int_{\mathbb{T}^{2}}N(B,x)dx&=8nt+4t^{2}\sim 8nt,\\
\int_{\mathbb{T}^{2}}\left\vert N(B,x)-\left( 8nt+4t^{2}\right) \right\vert
^{2}dx&=16n^{2}t-32n^{2}t^{2}+32nt^{2}-64nt^{3}+4t^{2}-16t^{4}\\
&\sim
16n^{2}t.
\end{align*}

Observe that the means of $N(A,x)$ and $N(B,x)$ are approximately the same
and they are much smaller than the variances, and that the variance of $%
N(A,x)$ is about twice the variance of $N(B,x)$. In particular, the
variances of these flat annuli have a sort of oscillating behavior.
\end{remark}

\begin{remark}\label{R3} 
The above are estimates of the discrepancy between volume
and integer points in translated annuli. As in \cite{HR, M, Mi, Si}, one may ask about similar estimates
when the annuli are not translated and the averages are with respect to
dilations. We suspect that the discrepancy with respect to dilations of spherical annuli may be much larger than the discrepancy with
respect to translations, and indeed in \cite{HR} it is proved that this is the case for annuli in the plane, that is in dimension $d=2$. 
\end{remark}

\bibliographystyle{amsplain}

\end{document}